\newtheorem{obs} [subsection]{Remark}
\newtheorem{prop}[subsection]{Proposition}
\newtheorem{teor}[subsection]{Theorem}
\newtheorem{lema}[subsection]{Lemma}
\newtheorem{cor} [subsection]{Corollary}
\newcommand{\Be}{\mathcal B_{k}}
\newcommand{\Ie}{\mathcal I_{k}}
\newcommand{\Bep}{\mathcal B_{k'}}
\newcommand{\Ok}{\mathcal O_k}
\newcommand{\ONR}{\Omega(\mathbb N,R)}
\newcommand{\LL}{L}
\newcommand{\NN}{\mathbb N}
\def\gcd{\operatorname{gcd}}
\def\Ree{\operatorname{Re}}
\numberwithin{equation}{section}
\begin{document}
\selectlanguage{english}
\frenchspacing

\large
\begin{center}
\textbf{A note on the linear independence of a class of series of functions}

Mircea Cimpoea\c s
\end{center}
\normalsize

\begin{abstract}\small{
For $k\in\mathbb R$, we consider a $\mathbb C$-algebra $\mathcal A_k$ of holomorphic functions in the half plane $\Ree z>k$ 
with (at most) subexponential growth on the real line to $+\infty$. In the $\mathcal A_k$-algebra of sequences of functions 
$\{\alpha:\NN\rightarrow \mathcal A_k\}$, we consider the $\mathcal A_k$-subalgebra $\mathcal H_k$ consisting in those $\alpha$ 
for which there exists a continuous map $M:\{\Ree z>k\}\rightarrow [0,+\infty)$ such that $|\alpha(n)(z)|\leq M(z)n^k$ for all $\Ree z>k,n\geq 1$, and 
$\lim_{x\rightarrow +\infty}e^{-ax}M(x)=0$, for all $a>0$.
Given $L$ a sequence of holomorphic functions on $\Ree z>k$ which satisfies certain conditions, we prove that the map $\alpha\mapsto F_L(\alpha)$, 
where $F_L(\alpha):=\sum_{n=1}^{+\infty}\alpha(n)(z)L(n)(z)$, is an injective morphism of $\mathcal A_k$-modules (or $\mathcal A_k$-algebras). 
Consequently, if $n\mapsto \alpha_j(n)(z)\in\mathbb C$, $1\leq j\leq r$, are linearly (algebraically) independent over $\mathbb C$, 
for $z$ in a nondiscrete subset of $\Ree z>k$, then $F_{\alpha_1},\ldots,F_{\alpha_r}$ are linearly (algebraically) independent 
over the quotient field of $\mathcal A_k$.

\noindent \textbf{Keywords:} series of functions; meromorphic functions; Dirichlet series.

\noindent \textbf{2010 MSC:} 30B50; 30D30}
\end{abstract}

\section*{Introduction}

Let $R$ be a commutative ring with unity and let $\NN=\{1,2,3,\ldots\}$ be the set of positive integers. The set of functions
$\Omega(\NN,R):=\{\alpha:\NN \rightarrow R\}$ has a natural structure of a $R$-algebra, with the operations:
\begin{eqnarray*}
& (\alpha+\beta)(n):=\alpha(n)+\beta(n),\;\forall n\in \NN, \\
& (\alpha \cdot \beta)(n):=\sum_{ab=n}\alpha(a)\beta(b),\;\forall n\in \NN.
\end{eqnarray*}
Moreover, if $R$ is a domain, then $\Omega(\NN,R)$ is also. Assume $R=\mathbb C$. 
The algebraic properties of the ring $\Omega(\NN,\mathbb C)$, known as the Dirichlet ring or the ring of arithmetic functions, were intensively studied in the literature, see for instance \cite{berb}, \cite{cash} and \cite{cash2}.

Let $k\in\mathbb R$. We denote $\mathcal O_k$ the domain of holomorphic functions on $\Ree z>k$. We let
$$O(n^k)=\{ \alpha\in\Omega(\NN,\mathbb C)\;|\;\exists C>0, \text{ such that} |\alpha(n)|\leq Cn^{k},\;\forall n\geq 1 \},$$
the set of arithmetic functions of order $n^k$. We denote $\mathcal D_k:=\bigcap_{\varepsilon>0}O(n^{k+\epsilon})$. It is easy to check
that $\mathcal D_k$ is a $\mathbb C$-subalgebra of $\Omega(\NN,\mathbb C)$. For any $\alpha\in \mathcal D_k$,
the \emph{Dirichlet series} $$F(\alpha)(z):=\sum_{n=1}^{+\infty}\frac{\alpha(n)}{n^z},$$
uniformly absolutely converge on the compact subsets of $\Ree z> k+1$, hence $F(\alpha)\in \mathcal O_{k+1}$. 
Also, $F(\alpha)$ is identically zero if and only if $\alpha=0$ (see \cite[Theorem 11.3]{apostol}). Therefore, 
by straightforward computations, the map
$F:\mathcal D_k \rightarrow \mathcal O_{k+1},\; \alpha\mapsto F(\alpha)$,
is an injective morphism of $\mathbb C$-algebras. 

Consequently, if $\alpha_1,\ldots,\alpha_r\in\mathcal D_k$ are linearly independent (algebraically independent) over $\mathbb C$,
then $F(\alpha_1),\ldots,F(\alpha_r)$ are also linearly independent (algebraically independent) over $\mathbb C$. 
This remark has very important consequences in analytic number theory, see for instance \cite{florin}, \cite{kac} and \cite{cim}.
The aim of our paper is to study the association $\alpha\mapsto F(\alpha)$ in a larger context. Our approach follows the methods used in
\cite{cim}. 

Let $k\in \mathbb R$. We consider the subsets 
\begin{eqnarray*}
& \mathcal C_k:=\{\alpha\in \Omega(\NN, \mathcal O_k) :\forall \varepsilon>0, \exists M_{\varepsilon}:\{\Ree z>k\}\rightarrow [0,+\infty) \text{ continuous }\\
& \text{ such that } |\alpha(n)(z)|<n^{k+\varepsilon}M_{\varepsilon}(z),\forall n\geq 1,\Ree z>k \},\\
& \mathcal E_k:=\{\LL\in \Omega(\NN,\mathcal O_{k})\;:\; \exists c>0,\;C:\{\Ree z>k\}\rightarrow [0,+\infty) \text{ continuous }\\
& \text{ such that } |\LL(n)(z)| \leq C(z)n^{- c(\Ree z)}, \;\forall n\geq 1, \Ree z>k\}.
\end{eqnarray*}
We prove that $\mathcal C_k$  has a natural structure of $\mathcal O_k$-algebra, see Proposition $1.1$.
In Proposition $1.4$, given $\alpha\in\mathcal D_k$ and $L\in\mathcal E_k$, we prove that there exists $k'\geq k$ such that the series of functions
$$F_L(\alpha)(z):=\sum_{n=1}^{+\infty}\alpha(n)(z)\LL(n)(z)$$
is uniformly absolutely convergent on the compact subsets of $\Ree z>k'$, hence $F(\alpha)\in \mathcal O_{k'}$.
In the main result of the first section, Theorem $1.5$, we prove that the map $\alpha\mapsto F_L(\alpha)$ is $\mathcal O_k$-linear and, moreover, is a morphism of
$\mathcal O_k$-algebras, if $L:(\NN,\cdot)\rightarrow (\mathcal O_k,\cdot)$ is a monoid morphism.

In the $\mathbb C$-vector space $\mathcal E_k$ we consider the subset
\begin{eqnarray*}
& \widetilde{\mathcal E}_k:=\{\LL\in \mathcal E_k\;:\; \LL(n)(z)\neq 0,\;\forall n\geq 1,\;\Ree z>k \text{ and } \\
& \forall n_0\geq 1,\; \exists  C(n_0)>0, \text{ such that } \frac{|L(n)(z)|}{|L(n_0)(z)|} \leq n^{-C(n_0)\Ree z},
\;\forall n\geq n_0+1,\;\Ree z>k \}.
\end{eqnarray*}
In Remark $1.6$, we note that the (general) Dirichlet series, see \cite{hardy}, and the classical Dirichlet series, see \cite{apostol}, are
particular cases of the type $F_{L}(\alpha)$, where $\alpha\in \Omega(N^*,\mathbb C)$ with polynomial growth and $L\in \widetilde{\mathcal E}_0$ has a special form. 

In the beginning of the second section, similarly to \cite{cim}, we define
\begin{eqnarray*}
& \Be := \{f \in \Ok \;:\;\forall a>0,\; \lim_{x\rightarrow +\infty}e^{-ax}|f(x)|=0 \}, \\
& \Ie := \{f\in \Ok \;:\exists a>0 \text{ such that} \;\lim_{x \rightarrow +\infty}e^{ax}|f(x)|=0 \},
\end{eqnarray*}
where the limits are taken on the real line.
In Proposition $2.1$ we show that $\Be$ is a subdomain in $\Ok$ and $\Ie$ is an ideal in $\Be$.
Moreover, in Proposition $2.5$ we prove that $\Ie$ does not contain non-zero entire functions of order $<1$.

Let $\mathcal A_{k} \subset (\mathcal B_{k}\setminus \mathcal I_{k})\cup \{0\}$ be a 
$\mathbb C$-subalgebra of $\mathcal B_{k}$. We consider the subset 
\begin{eqnarray*}
& \mathcal H_{k}:=\{ \alpha\in \Omega(\NN,\mathcal A_{k})\;:\;\exists M:\{\Ree z> k\}\rightarrow [0,+\infty) \text{ continuous, such that } \\
& \text{(i)} |\alpha(n)(z)|\leq M(z)n^{k},\;\forall n\geq 1,\Ree z>k,\text{ (ii) }\lim_{x\rightarrow+\infty} e^{-ax}M(x) = 0,\;\forall a>0\},
\end{eqnarray*}
which is an $\mathcal A_{k}$-subalgebra of $\Omega(\NN,\mathcal A_{k})$.

The main result of our paper is Theorem $2.7$, were we prove that for any $L\in \widetilde{\mathcal E}_k$, there exists a constant $k'\geq k$ which
depends on $L$, such that the map $F_L:\mathcal H_k \rightarrow \mathcal O_{k'}$, $\alpha\mapsto F_L(\alpha)$, is an injective morphism of $\mathcal A_k$-modules.
Moreover, if  $L:(\NN,\cdot)\rightarrow (\mathcal O_k,\cdot)$ is a monoid morphism, then $F_L$ is an injective morphism of $\mathcal A_k$-algebras.
Note that, Theorem $2.7$, in light of Remark $1.6$, generalize the identity theorem for (general) Dirichlet series, see \cite[Theorem 6]{hardy}.

Let $\alpha_1,\ldots,\alpha_r \in \mathcal H_{k}$ and assume there exists a nondiscrete subset $S\subset \{\Ree z\gg 0\}$ such that
the numerical sequences $n\mapsto \alpha_j(n)(z)$, $1\leq j\leq r$, are linearly independent over $\mathbb C$, for any $z\in S$.
Let $L\in\widetilde{\mathcal E}_{k}$. In Corollary $2.8$ we prove that $F_L(\alpha_1),\ldots,F_L(\alpha_r)\in \mathcal O_{k'}$ are linearly independent over 
$\mathcal F_{k'}:=$ the quotient field of $\mathcal A_{k'}$. Also, if $L$ is a monoid morphism between $(\NN,\cdot)$ and $(\mathcal O_k,\cdot)$ and 
 $n\mapsto \alpha_j(n)(z)$, $1\leq j\leq r$, are algebraically independent over $\mathbb C$, then $F_L(\alpha_1),\ldots,F_L(\alpha_r)$ are algebraically
independent over $\mathcal A_k$. The case of (general) Dirichlet series, discussed in Remark $1.6$, is reobtained as a particular case of Corollary $2.9$.

In the third section, we give an application to Dirichlet series associated to multiplicative arithmetic functions.  Given $\alpha_1,\ldots,\alpha_r\in \mathcal D_k$ multiplicative functions 
and an integer $m\geq 0$, such that $e,\alpha_1,\ldots,\alpha_r$ are pairwise non equivalent, in the sense of \cite{kac}, we prove that the derivatives of order $\leq m$, $F^{(i)}(\alpha_j)$, $1\leq j\leq r$,
$0\leq i\leq m$, are linearly independent over $\mathcal F_k$, and, in particular, over the field of  meromorphic functions of order $<1$, see Proposition $3.2$. 
This generalize the main result of \cite{kac}. Moreover, if $\alpha_1,\ldots,\alpha_r$ are algebraically independent over $\mathbb C$, then $F(\alpha_1),\ldots,F(\alpha_r)$ are algebraically
independent, see Proposition $3.3$. We also note in Remark $3.4$ the connections with the theory of Artin L-functions, see \cite{artin1}, and the main results of \cite{florin} and \cite{cim}. We note that other, and independent, cases of independence of suitable families
of Dirichlet series are proved in \cite{molt}.

\section{Algebras of sequences of holomorphic functions}

Let $R$ be a commutative ring with unity and denote $\NN$ the set of positive integers. In the set of functions
$\Omega(\NN,R):=\{\alpha:\NN \rightarrow R\}$, we consider two operations
\begin{eqnarray*}
& (\alpha+\beta)(n):=\alpha(n)+\beta(n),\;\forall n\in \NN, \\
& (\alpha \cdot \beta)(n):=\sum_{ab=n}\alpha(a)\beta(b),\;\forall n\in \NN.
\end{eqnarray*}
We denote $0,e \in \ONR$ the functions $0(n)=0$ for all $n\geq 1$, $e(1)=1$, $e(n)=0$, for all $n\geq 2$.
It is well known, see for instance \cite{cash} and \cite{cash2}, that $(\ONR,+,\cdot)$ is a commutative ring with the unity $e$.
Moreover, if $R$ is a domain then $\ONR$ is also a domain.

Let $\mathcal O:=\mathcal O(\mathbb C)$ be the domain of holomorphic (entire) functions, with the usual operations of addition and multiplication of functions. 
For any real number $k$, let $\mathcal O_k:=\mathcal O(\{\Ree z>k\})$ be the domain of holomorphic functions defined on the open half plane $\Ree z>k$.
Note that, the natural map 
$$i_k:\mathcal O \rightarrow \mathcal O_k, \; f\mapsto f|_{\Ree z>k},$$ is an injective morphism of $\mathbb C$-algebras. 
Indeed, if $f,g\in \mathcal O$ such that $f|_{\Ree z>k}=g|_{\Ree z>k}$, then, by the identity theorem of holomorphic functions, $f=g$.
If we see the maps $i_k$'s as inclusions, then $\mathcal O = \bigcap_{k\in\mathbb R} \mathcal O_k$. 
If $k\leq k'$ then the natural map $$i_{k,k'}:\mathcal O_{k} \rightarrow \mathcal O_{k'},\; f\mapsto f|_{\Ree z>k'},$$
is an injective morphism of $\mathbb C$-algebras. 

Moreover, $i_{k,k}$ is the identity map on $\mathcal O_k$ and for any $k''<k'<k$,
we have $i_{k,k''}=i_{k',k''}\circ i_{k,k'}$. Hence, $(\{\mathcal O_k\}_{k\in\mathbb R}, \{i_{k,k'}\}_{k\leq k'})$ is a direct system.
We denote $$\mathcal O_{\infty}:=\lim_{\longrightarrow}\mathcal O_k = \bigcup_{k\in\mathbb R}\mathcal O_k,$$
the direct limit of the above system. For the last equality, we see the maps $i_{k,k'}$'s as inclusions. Therefore 
$ \mathcal O \subset \mathcal O_k \subset \mathcal O_{\infty}$, for all $k\in \mathbb R$.
On the other hand, for any $k,k'\in\mathbb R$,
the map 
\begin{equation}\label{tkk}
T_{k-k'}:\mathcal O_{k'}\rightarrow \mathcal O_{k},\; T_{k-k'}(f)(z):=f(z-k'+k),\;\forall f\in\mathcal O_{k'},\Ree z>k',
\end{equation}
is a $\mathbb C$-algebra isomorphism. The above construction can be naturally extended as follows. For any $k<k'$, we have the natural maps of $\mathcal O_k$-algebras
$$ i_{k,k'}:\Omega(\NN,\mathcal O_k)\rightarrow \Omega(\NN,\mathcal O_{k'}),\; i_{k,k'}(\alpha)(n):=\alpha(n)|_{\Ree z> k'},\;\forall n\geq 1.$$
If we see $i_{k,k'}$'s as inclusions, we can define $\Omega^f(\NN,\mathcal O_{\infty}):=\bigcup_{k\in\mathbb R} \Omega(\NN,\mathcal O_k)$. 
Since $\alpha(n)\in \mathcal O_k$, for all $\alpha\in \Omega(\NN,\mathcal O_k)$, $k\in\mathbb R$ and $n\in \NN$, 
it follows that $\Omega(\NN,\mathcal O)=\bigcap_{k\in\mathbb R}\Omega(\NN,\mathcal O_k)$.
Note that
$$ \Omega(\NN,\mathcal O) \subset \Omega(\NN,\mathcal O_k) \subset \Omega^f(\NN,\mathcal O_{\infty})\subset \Omega(\NN,\mathcal O_{\infty}),
\forall k\in\mathbb R, $$
all the inclusion being strict. For any $k\in\mathbb R$, we consider {\small
$$\mathcal C_k:=\{\alpha\in \Omega(\NN, \mathcal O_k) :\forall\varepsilon>0, \exists M_{\varepsilon}:\{\Ree z>k\}\rightarrow [0,+\infty) \text{ continuous }$$ 
\begin{equation}\label{ceka}
\text{ such that } |\alpha(n)(z)|<n^{k+\varepsilon}M_{\varepsilon}(z),\forall n\geq 1,\Ree z>k \}. 
\end{equation}
As before, we can define $\mathcal C_{\infty}:=\bigcup_{k\in\mathbb R}\mathcal C_k$ and $\mathcal C:=\bigcap_{k\in\mathbb R}\mathcal C_k$. Note that
$$\mathcal C\subset \mathcal C_k \subset \mathcal C_{\infty} \subset \Omega^f(\NN,\mathcal O_{\infty}),\;\forall k\in\mathbb R,$$
where the inclusions are strict.

\begin{prop}
With the above notations, $\mathcal C_k$ is an $\mathcal O_k$-subalgebra of the domain $\Omega(\NN,\mathcal O_k)$.
\end{prop}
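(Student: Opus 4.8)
The plan is to verify directly the properties that make $\mathcal C_k$ an $\mathcal O_k$-subalgebra of $\Omega(\NN,\mathcal O_k)$: that it is stable under addition, under the Dirichlet convolution product, and under multiplication by scalars from $\mathcal O_k$, and that the canonical copy of $\mathcal O_k$ inside $\Omega(\NN,\mathcal O_k)$ — the elements $f\mapsto fe$, supported only at $n=1$ — already lies in $\mathcal C_k$, so in particular $e\in\mathcal C_k$. Throughout, the point is to produce, for each $\varepsilon>0$, a continuous nonnegative majorant of the required shape from majorants of the given data.

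Addition and the scalar action are immediate from \eqref{ceka}. If $\alpha,\beta\in\mathcal C_k$ with majorants $M_\varepsilon,N_\varepsilon$, then $|(\alpha+\beta)(n)(z)|<n^{k+\varepsilon}(M_\varepsilon+N_\varepsilon)(z)$, and $M_\varepsilon+N_\varepsilon$ is again continuous and nonnegative; similarly $|(f\alpha)(n)(z)|<n^{k+\varepsilon}(|f|M_\varepsilon)(z)$ for $f\in\mathcal O_k$, with $|f|M_\varepsilon$ continuous. The element $fe$ is handled with the constant-in-$n$ majorant $M_\varepsilon\equiv 1+|f|$, since only the term $n=1$ is nonzero; this also gives $e\in\mathcal C_k$, and $0\in\mathcal C_k$ trivially.

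The only real content is closure under the product. Fix $\alpha,\beta\in\mathcal C_k$ and $\varepsilon>0$; I would build a level-$\varepsilon$ majorant for $\alpha\cdot\beta$ by working at level $\varepsilon/2$ for each factor. The key algebraic observation is that for $ab=n$ one has $a^{\,k+\varepsilon/2}b^{\,k+\varepsilon/2}=(ab)^{\,k+\varepsilon/2}=n^{\,k+\varepsilon/2}$, so
$$|(\alpha\cdot\beta)(n)(z)|\le\sum_{ab=n}|\alpha(a)(z)|\,|\beta(b)(z)|<\Bigl(\sum_{ab=n}1\Bigr)\,n^{\,k+\varepsilon/2}\,M_{\varepsilon/2}(z)N_{\varepsilon/2}(z)=d(n)\,n^{\,k+\varepsilon/2}\,M_{\varepsilon/2}(z)N_{\varepsilon/2}(z),$$
where $d(n)$ denotes the number of divisors of $n$. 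Invoking the classical estimate $d(n)=O(n^{\delta})$ for every $\delta>0$, pick $C=C(\varepsilon/2)>0$ with $d(n)\le C\,n^{\varepsilon/2}$ for all $n\ge 1$; then $|(\alpha\cdot\beta)(n)(z)|<n^{\,k+\varepsilon}P_\varepsilon(z)$ with $P_\varepsilon:=C\,M_{\varepsilon/2}N_{\varepsilon/2}$ continuous and nonnegative, so $\alpha\cdot\beta\in\mathcal C_k$.

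The main obstacle, such as it is, is exactly this divisor sum: a crude bound $a^{\,k+\varepsilon},\,b^{\,k+\varepsilon}\le n^{\,k+\varepsilon}$ would cost a whole extra power of $n$, so one must instead exploit the multiplicativity $a^{s}b^{s}=n^{s}$ and absorb the resulting factor $d(n)$ into the subpolynomial margin, splitting the loss evenly between the two copies of $\varepsilon/2$. Everything else is routine manipulation of the continuous majorants; strictness of all inequalities is preserved because each divisor sum contains at least the term $a=1,\,b=n$ and the majorants are automatically strictly positive.
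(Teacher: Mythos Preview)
Your proof is correct and follows essentially the same route as the paper: the same triangle inequality for addition, the same use of $|f|M_\varepsilon$ for the scalar action, and the same key step for the product, namely $a^{s}b^{s}=n^{s}$ together with the divisor bound $d(n)=O(n^{\delta})$ for every $\delta>0$. The only cosmetic difference is that you fix the target level $\varepsilon$ and work with majorants at level $\varepsilon/2$, whereas the paper fixes the input level $\varepsilon$ and then passes to an arbitrary $\varepsilon'>\varepsilon$; your bookkeeping is slightly cleaner and you are a bit more careful about the unit and strictness, but the argument is the same.
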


\begin{proof}
 Let $\alpha,\beta\in \mathcal C_k$ and let $\varepsilon>0$. Let $M_{\varepsilon},M'_{\varepsilon}:\{\Ree z>k\} \rightarrow [0,+\infty)$ such that
      $$|\alpha(n)(z)|\leq n^{k+\varepsilon}M_{\varepsilon}(z),\; |\beta(n)(z)|\leq n^{k+\varepsilon}M'_{\varepsilon}(z),\; \forall n\geq 1,\Ree z>k.$$
      It follows that $$|(\alpha+\beta)(n)(z)| = |\alpha(n)(z)|+|\beta(n)(z)|\leq n^{k+\varepsilon}(M_{\varepsilon}(z)+M'_{\varepsilon}(z)),\; \forall n\geq 1,\Ree z>k,$$
      hence $\alpha+\beta \in\mathcal C_k$. On the other hand, for $n\geq 1$ and $\Ree z>k$, we have that
      \begin{equation}\label{pula}|(\alpha\cdot \beta)(n)(z)| 
        \leq \sum_{ab=n} |\alpha(a)(z)||\beta(b)(z)| \leq 
        \sum_{ab=n} a^{k+\varepsilon}M_{\varepsilon}(z)b^{k+\varepsilon}M'_{\varepsilon}(z) = d(n)n^{k+\varepsilon}M_{\varepsilon}(z)M'_{\varepsilon}(z), 
      \end{equation}
      where $d(n)$ is the number of (positive) divisors of $n$. For any $\varepsilon'>\varepsilon$, there exists a constant $C>0$ such that
      $d(n)< C n^{\varepsilon'-\varepsilon}$, for all $n\geq 1$, see \cite[Page 296]{apostol}. Let $\overline{M}_{\varepsilon'}(z)=C \cdot M_{\varepsilon}(z)M'_{\varepsilon}(z)$. By \eqref{pula} it follows that
      $$|(\alpha\cdot \beta)(n)(z)|\leq \overline{M}_{\varepsilon'}(z)n^{k+\varepsilon'},\;\forall n\geq 1,\Ree z>k,$$
      hence $\alpha\cdot\beta\in \mathcal C_k$. If $f\in\mathcal O_k$ and $\alpha\in \mathcal C_k$, then for any $\varepsilon>0$, we have that
      $$|(f\cdot \alpha)(n)(z)| = |f(z)||\alpha(n)(z)| \leq n^{k+\varepsilon}M_{\varepsilon}(z)|f(z)|,\forall n\geq 1,\Ree z>k,$$
      hence $f\cdot \alpha \in \mathcal C_k$.
\end{proof}


 \begin{cor}
  With the above notations, it hold that
  \begin{enumerate}
   \item[(1)] $\mathcal C_{\infty}$ is an $\mathcal O_{\infty}$-subalgebra of $\Omega^f(\NN,\mathcal O_{\infty})$.
   \item[(2)] $\mathcal C$ is an $\mathcal O$-subalgebra of $\Omega(\NN,\mathcal O)$.
   \item[(3)] The inclusions $\Omega(\NN, \mathcal O) \subset \Omega(\NN, \mathcal O_k) \subset \Omega^f(\NN,\mathcal O_{\infty})$ induce
              the inclusions $\mathcal C \subset \mathcal C_k\subset \mathcal C_{\infty}$, i.e. $\mathcal C_k=\mathcal C_{\infty}\cap \Omega(\NN, \mathcal O_k)$ and 
               $\mathcal C=\mathcal C_{\infty} \cap \Omega(\NN, \mathcal O) = \mathcal C_{k} \cap \Omega(\NN, \mathcal O)$.
\end{enumerate}
\end{cor}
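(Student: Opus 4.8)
The plan is to reduce all three parts to Proposition 1.1, after first establishing the one estimate that is absent from it: that for $k\le k'$ the restriction map $i_{k,k'}$ carries $\mathcal C_k$ into $\mathcal C_{k'}$. For this I would take $\alpha\in\mathcal C_k$ and $\varepsilon>0$, pick a continuous $M_\varepsilon$ on $\{\Ree z>k\}$ with $|\alpha(n)(z)|<n^{k+\varepsilon}M_\varepsilon(z)$ there, and observe that, since $n\ge 1$ and $k-k'\le 0$, we have $n^{k+\varepsilon}=n^{k-k'}n^{k'+\varepsilon}\le n^{k'+\varepsilon}$; hence $|\alpha(n)(z)|<n^{k'+\varepsilon}M_\varepsilon(z)$ on the smaller half-plane $\{\Ree z>k'\}$, where the restriction of $M_\varepsilon$ is still continuous, so $i_{k,k'}(\alpha)\in\mathcal C_{k'}$. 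Thus, under the identifications $i_{k,k'}$, the family $\{\mathcal C_k\}_k$ is a direct subsystem of $\{\Omega(\NN,\mathcal O_k)\}_k$, compatible with the direct system $\{\mathcal O_k\}_k$ of coefficient rings, and $0,e\in\mathcal C_k$ for each $k$ (take $M_\varepsilon\equiv 2$).

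With this in hand, part (1) is formal. Given $\alpha,\beta\in\mathcal C_\infty$ and $f\in\mathcal O_\infty$, I would choose $k_0,k_1,k_2$ with $\alpha\in\mathcal C_{k_0}$, $\beta\in\mathcal C_{k_1}$, $f\in\mathcal O_{k_2}$, set $m:=\max(k_0,k_1,k_2)$, note that $\alpha,\beta\in\mathcal C_m$ and $f\in\mathcal O_m$ by the monotonicity just proved, and apply Proposition 1.1 in $\Omega(\NN,\mathcal O_m)$ to obtain $\alpha+\beta,\ \alpha\cdot\beta,\ f\cdot\alpha\in\mathcal C_m\subseteq\mathcal C_\infty$; together with $e\in\mathcal C_\infty$ this shows $\mathcal C_\infty$ is an $\mathcal O_\infty$-subalgebra of $\Omega^f(\NN,\mathcal O_{\infty})$. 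For part (2), since $\mathcal C=\bigcap_k\mathcal C_k$, $\mathcal O=\bigcap_k\mathcal O_k$, and each $\mathcal C_k$ is (by Proposition 1.1) closed under addition, multiplication, and multiplication by $\mathcal O_k\supseteq\mathcal O$, the intersection $\mathcal C$ is closed under the same operations, contains $0$ and $e$, and satisfies $\mathcal C\subseteq\bigcap_k\Omega(\NN,\mathcal O_k)=\Omega(\NN,\mathcal O)$; hence $\mathcal C$ is an $\mathcal O$-subalgebra of $\Omega(\NN,\mathcal O)$.

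For part (3), the inclusions $\mathcal C\subseteq\mathcal C_k\subseteq\mathcal C_\infty$ are immediate from the definitions of $\mathcal C$ and $\mathcal C_\infty$ as an intersection and a union, and they are morphisms of the relevant algebras by parts (1)--(2). The identities $\mathcal C_k=\mathcal C_\infty\cap\Omega(\NN,\mathcal O_k)$ and $\mathcal C=\mathcal C_\infty\cap\Omega(\NN,\mathcal O)=\mathcal C_k\cap\Omega(\NN,\mathcal O)$ I would then obtain by unwinding definitions: the $\subseteq$ directions are clear, and for the reverse ones one takes an element represented at some level $k'$ that also lies in $\mathcal C_\infty$ and uses the monotonicity of the first paragraph to transport its $\mathcal C$-estimates to the required level. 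I expect the restriction estimate of the first paragraph to be the only genuinely new ingredient beyond Proposition 1.1; the point requiring the most care afterwards is the index bookkeeping in the reverse inclusions of part (3), where one must keep track of which level of the direct system a given germ of a sequence of functions actually lives at.
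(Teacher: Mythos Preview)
Your treatment of parts~(1) and~(2) is correct and matches the spirit of the paper's one-line proof (``It follows from Proposition~1.1 and the above considerations by easy checkings''): you simply make explicit the monotonicity $i_{k,k'}(\mathcal C_k)\subseteq\mathcal C_{k'}$ for $k\le k'$ that the paper leaves implicit, and then pass to the union and to the intersection.

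There is, however, a genuine problem with your plan for the reverse inclusions in part~(3). The monotonicity you established in your first paragraph transports $\mathcal C$-estimates \emph{upward}, from level $k$ to a larger level $k'$; but the inclusion $\mathcal C_\infty\cap\Omega(\NN,\mathcal O_k)\subseteq\mathcal C_k$ asks you to go \emph{downward}: from a $\mathcal C_{k'}$-bound with possibly $k'>k$ you would have to manufacture a $\mathcal C_k$-bound, i.e.\ $|\alpha(n)(z)|<n^{k+\varepsilon}M_\varepsilon(z)$ on $\{\Ree z>k\}$ for every $\varepsilon>0$. Nothing in your toolbox does this, and in fact it cannot be done. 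Take $\alpha(n)(z):=n^{k+1}$ (constant in $z$). Then $\alpha\in\Omega(\NN,\mathcal O)\subseteq\Omega(\NN,\mathcal O_k)$ and $\alpha\in\mathcal C_{k+1}\subseteq\mathcal C_\infty$, yet $\alpha\notin\mathcal C_k$: for $0<\varepsilon<1$ the inequality $n^{k+1}<n^{k+\varepsilon}M_\varepsilon(z)$ would force $M_\varepsilon(z)>n^{1-\varepsilon}$ for all $n\ge 1$, which is impossible. The same example (read with any $k''<k$) refutes $\mathcal C_k\cap\Omega(\NN,\mathcal O)\subseteq\mathcal C$. Thus the equalities in part~(3), taken literally, are false; only the inclusions $\mathcal C\subset\mathcal C_k\subset\mathcal C_\infty$ survive, and those are precisely the ``$\subseteq$ directions'' you already called clear. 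The paper's own proof says nothing further that would salvage the equalities.
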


\begin{proof}
 It follows from Proposition $1.1$ and the above considerations by easy checkings.  
\end{proof}

Let $k\in \mathbb R$. We define
$$\mathcal E_k:=\{\LL\in \Omega(\NN,\mathcal O_{k})\;:\; \exists c>0,\;C:\{\Ree z>k\}\rightarrow [0,+\infty) \text{ continuous } $$
\begin{equation}\label{ek}
 \text{ such that } |\LL(n)(z)| \leq C(z)n^{- c(\Ree z)}, \;\forall n\geq 1,\Ree z>k\}.
\end{equation}
We let $\mathcal E_{\infty}:=\bigcup_{k\in\mathbb R}\mathcal E_k$ and $\mathcal E:=\bigcap_{k\in\mathbb R}\mathcal E_k$, where the intersection and the union
are naturally defined.

\begin{prop}
For any $k\in\mathbb R$, $\mathcal E_k$ has a structure of a $\mathbb C$-vector space, hence $\mathcal E_{\infty}$ 
and $\mathcal E$ have also structures of $\mathbb C$-vector spaces.
\end{prop}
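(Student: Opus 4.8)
The plan is to check that $\mathcal{E}_k$ is closed under the two operations that make $\Omega(\NN,\mathcal{O}_k)$ a $\mathbb{C}$-vector space, namely pointwise addition $(L_1+L_2)(n)=L_1(n)+L_2(n)$ and scalar multiplication $(\lambda L)(n)=\lambda\,L(n)$. The zero sequence lies in $\mathcal{E}_k$ (take $C\equiv 0$ and any $c>0$), and scalar multiplication is immediate: if $L\in\mathcal{E}_k$ has witnesses $c>0$ and a continuous $C\colon\{\Ree z>k\}\to[0,+\infty)$ with $|L(n)(z)|\le C(z)n^{-c\Ree z}$ for all $n\ge 1$, $\Ree z>k$, then $|(\lambda L)(n)(z)|\le|\lambda|C(z)n^{-c\Ree z}$, so $\lambda L\in\mathcal{E}_k$ with exponent $c$ and majorant $|\lambda|C$. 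Once closure under addition is established, the corresponding statements for $\mathcal{E}_{\infty}=\bigcup_k\mathcal{E}_k$ and $\mathcal{E}=\bigcap_k\mathcal{E}_k$ follow formally, as in Corollary 1.2.

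For addition, let $L_1,L_2\in\mathcal{E}_k$ have witnesses $(c_1,C_1)$ and $(c_2,C_2)$, and set $c:=\min\{c_1,c_2\}>0$, $C:=C_1+C_2$ (continuous and nonnegative). For all $n\ge 1$ and $\Ree z>k$,
\[
|(L_1+L_2)(n)(z)|\le|L_1(n)(z)|+|L_2(n)(z)|\le C_1(z)n^{-c_1\Ree z}+C_2(z)n^{-c_2\Ree z},
\]
so it suffices to dominate the right-hand side by $C(z)n^{-c\Ree z}$, i.e. to have $n^{-c_i\Ree z}\le n^{-c\Ree z}$ for $i=1,2$. Since $n\ge 1$ and $c\le c_i$, these hold whenever $\Ree z\ge 0$; in particular they are automatic when $k\ge 0$, and then $L_1+L_2\in\mathcal{E}_k$ with witnesses $(c,C)$, completing the proof in that case.

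The main obstacle is precisely this comparison of monomials when $k<0$: on the strip $k<\Ree z<0$ the exponents $-c_i\Ree z$ are positive, so $n^{-c_1\Ree z}$ and $n^{-c_2\Ree z}$ grow in $n$ in the opposite order to the case $\Ree z>0$, and no single constant exponent dominates both of them on all of $\{\Ree z>k\}$ unless $c_1=c_2$. For arbitrary $k\in\mathbb{R}$ one therefore has to supplement the estimate on $\{\Ree z\ge 0\}$ with a separate, $n$-uniform bound on the bounded vertical strip $\{k<\Ree z<0\}$ — where the defining condition is in any case much weaker and each $L_i(n)$ is holomorphic; alternatively, since every series occurring in Sections 2 and 3 is ultimately treated in a half-plane $\{\Ree z\gg 0\}$, it is enough to record the statement for $k\ge 0$ and then pass to $\mathcal{E}_{\infty}$. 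Granting this, closure under addition holds, and together with the trivial verification for scalar multiplication it gives the asserted $\mathbb{C}$-vector-space structure on $\mathcal{E}_k$, $\mathcal{E}_{\infty}$, and $\mathcal{E}$.
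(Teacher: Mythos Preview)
The paper's proof is literally one line (``The proof is straightforward''), so your direct verification is the same approach carried out in detail --- and in doing so you have actually uncovered a genuine defect in the statement: for $k<0$ the set $\mathcal{E}_k$ is \emph{not} closed under addition, so the proposition is false as written. Concretely, take $k=-1$, $L_1(n)(z)=n^{-z}$ and $L_2(n)(z)=n^{-2z}$. Both lie in $\mathcal{E}_{-1}$ (with witnesses $(c_1,C_1)=(1,1)$ and $(c_2,C_2)=(2,1)$), but $L_1+L_2\notin\mathcal{E}_{-1}$: for real $x\in(-1,0)$ one has $n^{-x}+n^{-2x}\ge n^{-2x}$, which forces any admissible exponent to satisfy $c\ge 2$; for real $x>0$ one has $n^{-x}+n^{-2x}\ge n^{-x}$, which forces $c\le 1$. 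No single $c>0$ works on all of $\{\Ree z>-1\}$.

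This shows that your first suggested remedy --- a separate $n$-uniform bound on the strip $\{k<\Ree z<0\}$ --- cannot succeed in general. Your second remedy is the correct one: the argument you give is complete and correct for $k\ge 0$, and that is all the paper ever needs, since Proposition~1.4, Theorem~1.5, Remark~1.6 and everything in Sections~2--3 use only a single $L$ at a time and work in half-planes $\Ree z\gg 0$ (in particular $\mathcal{E}_\infty$ is unaffected). So your proof is fine for $k\ge0$, and your diagnosis of the obstruction for $k<0$ is sharper than the paper's own treatment.
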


\begin{proof}
 The proof is straightforward.
\end{proof}

\begin{prop}
Let $k\in \mathbb R$ and let $\LL\in \mathcal E_k$. There exists a constant $k'=k'(\LL)\geq k$ such that, 
for any $\alpha\in \mathcal C_k$, the series of functions 
$$F_{\LL}(\alpha)(z) := \sum_{n=1}^{+\infty}\alpha(n)(z)\LL(n)(z)$$
is uniformly absolutely convergent on the compact subsets of $\Ree z>k'$, hence $F(\alpha)\in \mathcal O_{k'}$.
\end{prop}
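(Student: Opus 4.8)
The plan is to estimate the general term of the series by combining the two growth conditions defining $\mathcal C_k$ and $\mathcal E_k$, and then to invoke the Weierstrass $M$-test on compact subsets. The one subtlety is that $k'$ must be chosen \emph{before} $\alpha$ is given, so that the same $k'$ works for every $\alpha\in\mathcal C_k$; this forces us to freeze the parameter $\varepsilon$ rather than let it shrink with $z$. So first I would fix once and for all $\varepsilon=1$. Since $\alpha\in\mathcal C_k$, by \eqref{ceka} there is a continuous $M_1\colon\{\Ree z>k\}\to[0,+\infty)$ with $|\alpha(n)(z)|\leq n^{k+1}M_1(z)$ for all $n\geq 1$, $\Ree z>k$; and since $\LL\in\mathcal E_k$, by \eqref{ek} there are $c>0$ and a continuous $C\colon\{\Ree z>k\}\to[0,+\infty)$ with $|\LL(n)(z)|\leq C(z)n^{-c\Ree z}$ for all $n\geq 1$, $\Ree z>k$. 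Multiplying these two bounds gives
$$|\alpha(n)(z)\LL(n)(z)|\leq M_1(z)C(z)\,n^{\,k+1-c\Ree z},\quad\forall n\geq 1,\ \Ree z>k.$$

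Next I would set $k':=\max\{k,\ (k+2)/c\}$, which depends only on $k$ and on $\LL$ (through $c$), not on $\alpha$. Let $K\subset\{\Ree z>k'\}$ be an arbitrary compact set and put $\sigma_K:=\min_{z\in K}\Ree z$, which exists and satisfies $\sigma_K>k'\geq (k+2)/c$, hence $k+1-c\sigma_K<-1$. For $n\geq 1$ and $z\in K$ one has $n^{\,k+1-c\Ree z}\leq n^{\,k+1-c\sigma_K}$ (the exponent is a decreasing function of $\Ree z$ and $n\geq 1$), while $M_1C$, being continuous on the compact set $K$, is bounded there by some $B_K\geq 0$. Therefore
$$\sum_{n=1}^{+\infty}\ \sup_{z\in K}\,|\alpha(n)(z)\LL(n)(z)|\ \leq\ B_K\sum_{n=1}^{+\infty}n^{\,k+1-c\sigma_K}\ <\ +\infty,$$
since the exponent $k+1-c\sigma_K$ is strictly less than $-1$.

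Finally, the Weierstrass $M$-test yields uniform absolute convergence of $\sum_{n}\alpha(n)(z)\LL(n)(z)$ on $K$; as $K$ ranged over all compact subsets of $\{\Ree z>k'\}$, the series $F_{\LL}(\alpha)$ is uniformly absolutely convergent on the compact subsets of $\Ree z>k'$. Each partial sum $\sum_{n=1}^{N}\alpha(n)(z)\LL(n)(z)$ is holomorphic on $\Ree z>k'$ (a finite sum of products of elements of $\mathcal O_{k'}$, using $k'\geq k$), and a locally uniform limit of holomorphic functions is holomorphic, so $F_{\LL}(\alpha)\in\mathcal O_{k'}$. There is no real obstacle in this argument beyond the bookkeeping just mentioned: once $\varepsilon=1$ is fixed, the divisor-counting estimate is not needed (unlike in Proposition $1.1$), and everything reduces to the convergence of $\sum n^{s}$ for $\Ree s<-1$.
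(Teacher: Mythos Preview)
Your proof is correct and follows essentially the same route as the paper's: combine the bounds from \eqref{ceka} and \eqref{ek}, choose $k'$ in terms of $k$ and the constant $c$ coming from $\LL$, and apply the Weierstrass $M$-test on compacta. The only cosmetic difference is that the paper sets $k'=\max\{k,(k+1)/c\}$ and then, for each compact $K$, picks $\varepsilon<c(r-k')$ a posteriori, whereas you freeze $\varepsilon=1$ up front at the cost of the slightly larger $k'=\max\{k,(k+2)/c\}$; your version is a bit cleaner and makes the independence of $k'$ from $\alpha$ more transparent.
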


\begin{proof}
Let $\varepsilon>0$. From $(1.2)$ and \eqref{ek}, there exist a constant $c>0$ and two continuous maps $M_{\varepsilon},C:\{\Ree z>k\}\rightarrow [0,+\infty)$ such that
\begin{equation}\label{cucu1}
|\alpha(n)(z)\LL(n)(z)|\leq M_{\varepsilon}(z)n^{k+\varepsilon}C(z)n^{-c(\Ree z)} = M_{\varepsilon}(z)C(z)n^{k+\varepsilon-c\Ree z}
,\;\forall ;n\geq 1,\Ree z>k.
\end{equation}
Let $k':=\max\{\frac{k+1}{c},k\}$ and let $K\subset \{\Ree z>k'\}$ be a compact subset. 
Let $r:=\inf \{\Ree z\;:\;z\in K\}$. Since $K$ is compact and $\varepsilon>0$ can be arbitrarily chosen, we can assume that 
$\delta:=c(r-k')>\varepsilon$. From \eqref{cucu1} it follows that 
\begin{equation}\label{cucu2}
|\alpha(n)(z)\LL(n)(z)|\leq  M_{\varepsilon}(z)C(z) n^{-1-\delta+\varepsilon},\;\forall n\geq 1,\; z\in K.
\end{equation}
Let $M_K:=\sup\{M_{\varepsilon}(z)C(z):\; z\in K \}$. From \eqref{cucu2} it follows that
$$|\alpha(n)(z)\LL(n)(z)|\leq \frac{M_K}{n^{1+\delta-\varepsilon}},\;\forall n\geq 1,\;z\in K,$$
hence $F_{\LL}(\alpha)$ is uniformly absolutely convergent on $K$. Consequently, $F_{\LL}(\alpha)\in \mathcal O_{k'}$.
\end{proof}

\begin{teor}
Let $\LL\in \mathcal E_{\infty}$. 
\begin{enumerate}
\item[(1)] The map $F_{\LL}:\mathcal C_{\infty} \rightarrow \mathcal O_{\infty}$, $\alpha\mapsto F_{\LL}(\alpha)$, 
is a linear map of $\mathcal O_{\infty}$-modules. Moreover, if $\LL(1)(z)\neq 0$ for all $\Ree z\gg 0$, then $F_{\LL}$
is surjective.
\item[(2)] If $\LL(ab)=\LL(a)\LL(b)$, for all $a,b\in\NN$, and $\LL\neq 0$, then $F_{\LL}$
is a surjective morphism of $\mathcal O_{\infty}$-algebras with $F_{\LL}(f\cdot e)=f$, for all $f\in\mathcal O_{\infty}$.
\end{enumerate}
\end{teor}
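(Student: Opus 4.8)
The plan is to treat the two parts separately. Part (1) comes down to termwise manipulation of series that converge uniformly absolutely by Proposition $1.4$, and part (2) follows from the Cauchy product formula once one knows that $\LL(1)=1$.

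For part (1) I would first pass to a single half-plane. Given $\alpha,\beta\in\mathcal C_{\infty}$ and $f\in\mathcal O_{\infty}$, choose $k$ with $\alpha,\beta\in\mathcal C_k$, $f\in\mathcal O_k$ and $\LL\in\mathcal E_k$ — possible because $\mathcal C_{\infty}$, $\mathcal O_{\infty}$, $\mathcal E_{\infty}$ are the respective unions over $k$ — and observe that $\alpha+\beta$ and $f\cdot\alpha$ still lie in $\mathcal C_k$ by Proposition $1.1$. Proposition $1.4$ then supplies one constant $k'=k'(\LL)\geq k$ such that $F_{\LL}(\gamma)$ converges uniformly absolutely on the compacts of $\{\Ree z>k'\}$ for every $\gamma\in\mathcal C_k$; in particular $F_{\LL}(\alpha)$, $F_{\LL}(\beta)$, $F_{\LL}(\alpha+\beta)$ and $F_{\LL}(f\cdot\alpha)$ all belong to $\mathcal O_{k'}$. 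On $\{\Ree z>k'\}$, absolute convergence lets me split $\sum_n(\alpha(n)(z)+\beta(n)(z))\LL(n)(z)$ into $\sum_n\alpha(n)(z)\LL(n)(z)+\sum_n\beta(n)(z)\LL(n)(z)$ and pull the common factor $f(z)$ out of $\sum_n f(z)\alpha(n)(z)\LL(n)(z)$, so that $F_{\LL}(\alpha+\beta)=F_{\LL}(\alpha)+F_{\LL}(\beta)$ and $F_{\LL}(f\cdot\alpha)=f\cdot F_{\LL}(\alpha)$ in $\mathcal O_{\infty}$; this is the $\mathcal O_{\infty}$-linearity. For surjectivity, suppose $\LL(1)(z)\neq0$ for $\Ree z>k_0$. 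Given $f\in\mathcal O_{\infty}$, set $g:=f/\LL(1)$, which is holomorphic on a half-plane $\{\Ree z>k''\}$, and feed in the sequence $\alpha:=g\cdot e$ supported at $n=1$. Then $\alpha\in\mathcal C_{k''}\subset\mathcal C_{\infty}$ (take $M_{\varepsilon}:=|g|+1$, which is continuous and positive, so that $|\alpha(1)(z)|<M_{\varepsilon}(z)$ and $|\alpha(n)(z)|=0<n^{k''+\varepsilon}M_{\varepsilon}(z)$ for $n\geq2$), and $F_{\LL}(\alpha)(z)=g(z)\LL(1)(z)=f(z)$, so $F_{\LL}$ is onto.

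For part (2), the first step is that $\LL(1)=\LL(1\cdot1)=\LL(1)^2$ is an idempotent of the integral domain $\mathcal O_k$, hence $\LL(1)\in\{0,1\}$; since $\LL(1)=0$ would give $\LL(n)=\LL(n)\LL(1)=0$ for every $n$, contradicting $\LL\neq0$, we conclude $\LL(1)=1$. Consequently $F_{\LL}(f\cdot e)(z)=f(z)\LL(1)(z)=f(z)$ for all $f\in\mathcal O_{\infty}$, which reproves surjectivity and, with $f=1$, shows $F_{\LL}(e)=1$. The heart of part (2) is multiplicativity. Fix $\alpha,\beta\in\mathcal C_{\infty}$, choose $k$ with $\alpha,\beta,\alpha\cdot\beta\in\mathcal C_k$ and $\LL\in\mathcal E_k$, and take $k'=k'(\LL)$ from Proposition $1.4$. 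For $\Ree z>k'$ the one-variable families $(\alpha(a)(z)\LL(a)(z))_{a\geq1}$ and $(\beta(b)(z)\LL(b)(z))_{b\geq1}$ are absolutely summable, so the double family of their products is absolutely summable as well — its sum of absolute values being the product of the two one-variable sums of absolute values — and the rearrangement theorem for absolutely convergent double series permits grouping the terms along the hyperbolas $ab=n$. Using $\LL(ab)=\LL(a)\LL(b)$ one then obtains
\[
F_{\LL}(\alpha)(z)F_{\LL}(\beta)(z)=\sum_{a,b\geq1}\alpha(a)(z)\LL(a)(z)\beta(b)(z)\LL(b)(z)=\sum_{n\geq1}\Big(\sum_{ab=n}\alpha(a)(z)\beta(b)(z)\Big)\LL(n)(z)=F_{\LL}(\alpha\cdot\beta)(z)
\]
for all $\Ree z>k'$, hence $F_{\LL}(\alpha\cdot\beta)=F_{\LL}(\alpha)F_{\LL}(\beta)$ in $\mathcal O_{\infty}$, and $F_{\LL}$ is a surjective morphism of $\mathcal O_{\infty}$-algebras.

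I do not anticipate a genuine obstacle: the whole argument rests on Proposition $1.4$ supplying uniform absolute convergence on compacts, and the one point that needs attention is that its abscissa $k'$ depends only on $\LL$ and not on the particular sequence, so that all the series in play live over one common half-plane where termwise splitting, scalar factoring and the Cauchy product are legitimate; Proposition $1.1$ ensures that the auxiliary sequences we construct ($\alpha+\beta$, $f\cdot\alpha$, $\alpha\cdot\beta$, $g\cdot e$) stay inside the relevant $\mathcal C_k$. The only mildly delicate bookkeeping is the initial reduction from $\mathcal C_{\infty},\mathcal O_{\infty},\mathcal E_{\infty}$ to a fixed large $k$.
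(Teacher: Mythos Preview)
Your proposal is correct and follows essentially the same route as the paper: termwise linearity, the preimage $g\cdot e$ with $g=f/\LL(1)$ for surjectivity, the idempotent relation $\LL(1)=\LL(1)^2$ to force $\LL(1)=1$, and the Cauchy product for multiplicativity. Your reduction to a common half-plane and your justification of the double-series rearrangement are more carefully spelled out than in the paper, and your use of the integral-domain property of $\mathcal O_k$ to dispose of the idempotent is a slightly cleaner packaging of the paper's identity-theorem argument, but the underlying ideas are the same.
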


\begin{proof}
$(1)$ Let $\alpha,\beta \in \mathcal C_{\infty}$, $f \in \mathcal O_{\infty}$ and $z\in \mathbb C$ with $\Ree z\gg 0$. 
      We have that 
      $$ F_{\LL}(\alpha+\beta)(z) = \sum_{n=1}^{+\infty}(\alpha+\beta)(n)(z)\LL(n)(z) = $$
      $$ = \sum_{n=1}^{+\infty}\alpha(n)(z)\LL(n)(z) + \sum_{n=1}^{+\infty}\beta(n)(z)\LL(n)(z) = 
         F_{\LL}(\alpha)(z)+F(\beta)_{\LL}(z) \text{ and } $$
      $$ F_{\LL}(f\cdot \alpha)(z) = \sum_{n=1}^{+\infty}(f\cdot \alpha)(n)(z)\LL(n)(z) = 
         f(z)\sum_{n=1}^{+\infty}\alpha(n)(z)\LL(n)(z) = f(z)F(\alpha)(z),$$
       thus $F$ is a linear map of $\mathcal O_{\infty}$-algebras.			
Now, assume $F_{\LL}(1)(z)\neq 0$, for all $\Ree z\gg 0$ and let 
$$g(z):=f(z)\LL(1)(z)^{-1},\; \Ree z\gg 0.$$
It follows that $F_{\LL}(g\cdot e)=f$, hence $F_{\LL}$ is surjective.

$(2)$ Let $\alpha,\beta \in \mathcal C_{\infty}$ and $z\in \mathbb C$ with $\Ree z\gg 0$. We have that
 $$ F_{\LL}(\alpha\cdot \beta)(z)= \sum_{n=1}^{+\infty}(\alpha\cdot \beta)(n)(z)\LL(n)(z) = 
     \sum_{n=1}^{+\infty}\sum_{ab=n}\alpha(a)(z)\beta(b)(z)\LL(ab)(z) = $$
  $$ = \sum_{a=1}^{+\infty}\alpha(a)(z)L(a)(z)\sum_{b=1}^{+\infty}\beta(b)(z)L(b)(z)=F(\alpha)(z)F(\beta)(z),$$
 therefore  $F_{\LL}$ is a morphism of $\mathcal O_{\infty}$-algebras. Moreover, the hypothesis implies 
 $\LL(1)(z)=\LL(1)(z)^2$ so either $\LL(1)(z)=0$, either $\LL(1)(z)=1$. If the zero set of $\LL(1)$ is nondiscrete,
 then $\LL(1)$ is identically zero, which implies $\LL(n)=\LL(1)\LL(n)$ is identically zero, for all $n\geq 1$, a
 contradiction. Therefore, $\LL(1)$ takes the value $1$ in a nondiscrete subset of the half-plane $\Ree z\gg 0$, hence 
 $\LL(1)(z)=1$, for all $z\in \mathbb C$.		
\end{proof}

Let $k\in \mathbb R$. We consider the following subset
 $$
 \widetilde{\mathcal E}_k:=\{\LL\in \mathcal E_k\;:\; \LL(n)(z)\neq 0,\;\forall n\geq 1,\;\Ree z>k \text{ and } 
$$
\begin{equation}\label{ekt} 
 \forall n_0\geq 1,\; \exists  C(n_0)>0, \text{ such that } \frac{|L(n)(z)|}{|L(n_0)(z)|} \leq n^{-C(n_0)\Ree z},\;\forall n\geq n_0+1,\; \Ree z>k \}.
\end{equation}
 We consider also $\widetilde{\mathcal E}_{\infty}:=\bigcup_{k\in\mathbb R}\widetilde{\mathcal E}_k 
\text{ and } \widetilde{\mathcal E}:=\bigcap_{k\in\mathbb R}\widetilde{\mathcal E}_k$, where the intersection and the union
are naturally defined.

\begin{obs} \emph{
Let $\lambda:\NN \rightarrow (0,+\infty)$ be an increasing sequence of positive real numbers such that 
\begin{equation}\label{lambda}
 \limsup_{n\rightarrow+\infty}\frac{\log n}{\lambda (n)} < + \infty.
\end{equation}
We define 
$\LL:=e^{-\lambda}:\NN \rightarrow \mathcal O,\; \LL(n)(z):=e^{-\lambda(n)z}, \;z\in\mathbb C.$
The condition \eqref{lambda} implies that there exists a constant $c>0$ such that
\begin{equation}\label{lambda2}
 \lambda(n)\geq c\log n,\;\forall n\geq 0.
\end{equation}
From \eqref{lambda2} it follows that 
 $$ |\LL(n)(z)|\leq e^{-c(\Ree z)\log n} = n^{-c\Ree z},\;\forall z\in\mathbb C,n\geq 1,$$
hence $\LL\in \mathcal E$. Also $L(n)(z)\neq 0$, for all $z\in \mathbb C$. Let $n_0\in\NN$ and $n\geq n_0+1$.
It holds that
\begin{equation}
\left|\frac{\LL(n)(z)}{\LL(n_0)(z)}\right| = e^{-(\lambda(n)-\lambda(n_0))\Ree z},\;\forall z\in\mathbb C.
\end{equation}
By $(1.9)$, there exists a constant $C(n_0)>0$ such that 
$$ e^{-(\lambda(n)-\lambda(n_0))\Ree z} \leq e^{-C(n_0)\log n \Ree z} = n^{-C(n_0)\Ree z},\;\forall z\in \mathbb C,\;\Ree z>0,$$
hence, from $(1.10)$, it follows that $\LL\in \widetilde{\mathcal E_0}$.}
\emph{We let
$$O(n^k)=\{ \alpha\in\Omega(\NN,\mathbb C)\;|\;\exists C>0, \text{ such that} |\alpha(n)|\leq Cn^{k},\;\forall n\geq 1\},$$
the set of arithmetical functions of order $n^k$. We consider the $\mathbb C$-algebras 
\begin{equation}
\mathcal D :=\mathcal C\cap \Omega(\NN,\mathbb C), \mathcal D_k := \mathcal C_k\cap \Omega(\NN,\mathbb C) \text{ and } \mathcal D_{\infty} := \mathcal C_{\infty} \cap \Omega(\NN,\mathbb C).
\end{equation}
It is easy to check that 
$$\mathcal D_k=\bigcap_{\varepsilon>0} O(n^{k+\varepsilon}), \; \mathcal D_{\infty} = \bigcup_{k\in \mathbb R} O(n^k) \text{ and } \mathcal D=\bigcap_{k\in\mathbb R} O(n^k).$$
If $\alpha\in \mathcal D_k$ then the \emph{general Dirichlet series}
$$
F_{\lambda}(\alpha)(z):=F_{\LL}(\alpha)(z)=\sum_{n=1}^{+\infty}\alpha(n)e^{-\lambda(n)z},
$$
defines a holomorphic function on $\Ree z>k'$.}

\emph{
It is well known \cite[Theorem 6]{hardy} that $F_{\lambda}(\alpha)$ is identically zero 
if and only if $\alpha(n)=0$, for all $n\geq 1$. Consequently, from Theorem $1.5(1)$, the map
$F_{\lambda}:\mathcal D_{\infty}\rightarrow \mathcal O_{\infty},\; \alpha\mapsto F_{\lambda}(\alpha)$,
is an injective morphism of $\mathbb C$-vector spaces.
In particular, for $\lambda(n)=\log n$, the \emph{(classical) Dirichlet series}
$$
F(\alpha)(z):=\sum_{n=1}^{+\infty}\frac{\alpha(n)}{n^z},
$$
defines a holomorphic function on $\Ree z>k+1$. From the above remarks or as a consequence of the uniqueness theorem for Dirichlet series (\cite[Theorem 11.3]{apostol}),
$F(\alpha)$ is identically zero if and only if $\alpha(n)=0$, for all $n\geq 1$. Hence, from Theorem $1.5(2)$, the map
$F_{\lambda}:\mathcal D_{\infty}\rightarrow \mathcal O_{\infty},\; \alpha\mapsto F_{\lambda}(\alpha),$
is an injective morphism of $\mathbb C$-algebras.}
\end{obs}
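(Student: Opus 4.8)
The plan is to verify in turn the assertions bundled into this remark: that $\LL=e^{-\lambda}$ lies in $\widetilde{\mathcal E}_0$, that $F_{\LL}$ is injective, and that the classical Dirichlet series is recovered as the multiplicative special case. First I would extract \eqref{lambda2} from \eqref{lambda}: writing $M:=\limsup_n\frac{\log n}{\lambda(n)}<+\infty$, there is an $N$ with $\frac{\log n}{\lambda(n)}\leq M+1$ for $n\geq N$, while for the finitely many $2\leq n<N$ the numbers $\frac{\lambda(n)}{\log n}$ are positive reals; taking $c$ to be the minimum of $\frac{1}{M+1}$ and these finitely many values (the case $n=1$ being automatic since $\log 1=0$) gives $\lambda(n)\geq c\log n$ for all $n$. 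For $\Ree z>0$ this yields $|\LL(n)(z)|=e^{-\lambda(n)\Ree z}\leq e^{-c(\Ree z)\log n}=n^{-c\Ree z}$, so $\LL\in\mathcal E_0$ with $C\equiv 1$, and since $e^{-\lambda(n)z}$ never vanishes the first clause of $\widetilde{\mathcal E}_0$ holds.

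Next I would establish the ratio bound \eqref{ekt}. For $n\geq n_0+1$ one has $\bigl|\frac{\LL(n)(z)}{\LL(n_0)(z)}\bigr|=e^{-(\lambda(n)-\lambda(n_0))\Ree z}$, which is $\leq n^{-C(n_0)\Ree z}$ (for $\Ree z>0$) exactly when $\lambda(n)-\lambda(n_0)\geq C(n_0)\log n$. This is the step that needs care and is the main obstacle. Using \eqref{lambda2} I would note $\lambda(n)-\lambda(n_0)\geq c\log n-\lambda(n_0)\geq\frac{c}{2}\log n$ once $\log n\geq\frac{2}{c}\lambda(n_0)$, i.e. for all $n$ past a threshold depending on $n_0$; for the finitely many remaining $n$ with $n_0+1\leq n$ below that threshold, the quantities $\frac{\lambda(n)-\lambda(n_0)}{\log n}$ are strictly positive because $\lambda$ is increasing, so their minimum together with $\frac{c}{2}$ produces the required $C(n_0)>0$. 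Matching this asymptotic regime with the finite initial block — the one place where both monotonicity of $\lambda$ and the uniform lower bound \eqref{lambda2} are used — completes the verification that $\LL\in\widetilde{\mathcal E}_0$.

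Finally I would dispatch the remaining identifications and the injectivity statements. The equality $\mathcal D_k=\bigcap_{\varepsilon>0}O(n^{k+\varepsilon})$ follows because a sequence constant in $z$ satisfies \eqref{ceka} precisely when the bounding map $M_\varepsilon$ may be taken constant; holomorphy of $F_\lambda(\alpha)$ on $\Ree z>k'$ is then Proposition $1.4$ applied to $\alpha\in\mathcal D_k\subset\mathcal C_k$ and $\LL\in\mathcal E_0$. For injectivity I would cite \cite[Theorem 6]{hardy} for the implication $F_\lambda(\alpha)\equiv 0\Rightarrow\alpha=0$ and combine it with the linearity from Theorem $1.5(1)$, giving that $F_\lambda$ is an injective morphism of $\mathbb C$-vector spaces. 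In the classical case $\lambda(n)=\log n$ one has $e^{-\lambda(n)z}=n^{-z}$, hence $\LL(ab)=\LL(a)\LL(b)$, so Theorem $1.5(2)$ upgrades $F$ to an algebra morphism; the uniqueness theorem \cite[Theorem 11.3]{apostol} again supplies injectivity, yielding an injective morphism of $\mathbb C$-algebras.
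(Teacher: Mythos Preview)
Your proposal is correct and follows the paper's own approach; since this statement is a Remark whose justifications are woven into the text, your argument is essentially a more detailed version of what the paper itself writes. In particular, you supply the splitting into an asymptotic regime and a finite initial block to produce $C(n_0)$ (which the paper merely asserts ``by $(1.9)$''), and your restriction of the bound $|\LL(n)(z)|\leq n^{-c\Ree z}$ to $\Ree z>0$ is in fact more accurate than the paper's ``$\forall z\in\mathbb C$''.
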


\section{Main results}

Let $k\in\mathbb R$. Similarly to \cite[page 2]{cim}, we define
\begin{equation}
\Be := \{f \in \Ok \;:\;\forall a>0,\; \lim_{x\rightarrow +\infty}e^{-ax}|f(x)|=0 \}, 
\end{equation}
\begin{equation}
\Ie := \{f\in \Ok \;:\exists a>0
\text{ such that} \;\lim_{x \rightarrow +\infty}e^{ax}|f(x)|=0 \}.
\end{equation}
We also let
\begin{equation}
\mathcal B_{\infty}:=\bigcup_{k\in\mathbb R}\Be \subset \mathcal O_{\infty},\; \mathcal B:=\bigcap_{k\in\mathbb R}\Be\subset \mathcal O,\; \mathcal I_{\infty}:=\bigcup_{k\in\mathbb R}\Ie \subset \mathcal O_{\infty} 
\text{ and }\;\mathcal I :=\bigcap_{k\in\mathbb R}\Ie \subset \mathcal O.
\end{equation}

\begin{prop}
With the above notations, it hold that
\begin{enumerate}
\item[(1)] $\Be$ is a subdomain of $\Ok$ and $\Ie$ is an ideal of $\Be$.
\item[(2)] $\mathbb C[z]\subset \Be$ and $\Ie \cap \mathbb C[z] = \{0\}$. In particular, $\Be$ is a $\mathbb C[z]$-subalgebra of $\Ok$.
\item[(3)] If $k,k'\in\mathbb R$, then $T_{k-k'}:\Be \rightarrow \Bep$, $T_{k-k'}(f)(z):=f(z+k-k')$, for all $f\in\Bep$, is a $\mathbb C$-algebra isomorphism.
\end{enumerate}
\end{prop}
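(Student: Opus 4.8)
The plan is to reduce all three assertions to elementary comparisons, on the positive real axis, between exponentials and the growth of $|f|$, using that $\Ok$ is an integral domain (holomorphic functions on the connected set $\{\Ree z>k\}$ have no zero divisors), so that any subring of $\Ok$ containing $1$ is automatically a domain.

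For (1), I would first check that $\Be$ is a subring: it contains the constants, $|f(x)+g(x)|\le|f(x)|+|g(x)|$ gives closure under addition, and the factorization $e^{-ax}|f(x)g(x)|=\bigl(e^{-(a/2)x}|f(x)|\bigr)\bigl(e^{-(a/2)x}|g(x)|\bigr)$, each factor tending to $0$, gives closure under multiplication; being a subring of the domain $\Ok$, it is itself a domain. For the ideal statement I would first check $\Ie\subseteq\Be$: from $e^{ax}|f(x)|\to 0$ one gets $|f(x)|\le Ce^{-ax}$ for $x\gg 0$, so $e^{-bx}|f(x)|\le Ce^{-(a+b)x}\to 0$ for every $b>0$. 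Closure of $\Ie$ under addition follows by taking the smaller of the two decay exponents (and closure under negation is trivial), while the absorption property $\Be\cdot\Ie\subseteq\Ie$ is the estimate $e^{(a/2)x}|f(x)g(x)|=\bigl(e^{ax}|f(x)|\bigr)\bigl(e^{-(a/2)x}|g(x)|\bigr)\to 0$ when $f\in\Ie$ has decay exponent $a$ and $g\in\Be$.

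For (2), every $p\in\mathbb C[z]$ satisfies $|p(x)|\le C(1+|x|)^{\deg p}$ and $e^{-ax}(1+|x|)^{\deg p}\to 0$ as $x\to+\infty$ for every $a>0$, so $\mathbb C[z]\subset\Be$; since $\Be$ is a subring of $\Ok$ containing $\mathbb C[z]$, it is a $\mathbb C[z]$-subalgebra. Conversely, a nonzero polynomial $p$ has $|p(x)|$ bounded below by a positive constant for large $x$ (it is a nonzero constant, or $|p(x)|\to+\infty$), so $e^{ax}|p(x)|\to+\infty$ for every $a>0$ and $p\notin\Ie$; hence $\Ie\cap\mathbb C[z]=\{0\}$.

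For (3), I would invoke \eqref{tkk}, which already gives that $T_{k-k'}:\mathcal O_{k'}\to\mathcal O_k$ is a $\mathbb C$-algebra isomorphism with inverse $T_{k'-k}$, and verify only that it carries $\Bep$ into $\Be$ (the reverse inclusion is the same statement with $k$ and $k'$ interchanged). For $f\in\Bep$ and $a>0$, writing $y=x+k-k'$ one has $e^{-ax}|T_{k-k'}(f)(x)|=e^{-a(k'-k)}e^{-ay}|f(y)|\to 0$ as $x\to+\infty$, since $f\in\Bep$; hence $T_{k-k'}(f)\in\Be$. Thus $T_{k-k'}$ and $T_{k'-k}$ restrict to mutually inverse $\mathbb C$-algebra homomorphisms between $\Bep$ and $\Be$, which is the claim; the same substitution shows $T_{k-k'}$ also restricts to an isomorphism $\Iep\to\Ie$. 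Nothing here is a genuine obstacle; the only care needed is in choosing the exponents in the two multiplicative estimates — $a/2+a/2$ for products in $\Be$, and spending only half of the exponent $a$ of the $\Ie$-factor for the absorption property.
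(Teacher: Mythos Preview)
Your proof is correct and follows essentially the same approach as the paper's: the $a/2+a/2$ split for products in $\Be$, the minimum-exponent argument for sums in $\Ie$, and the $a/2$ absorption estimate for $\Be\cdot\Ie\subseteq\Ie$ are exactly what the paper does. If anything you are more thorough---you explicitly verify $\Ie\subseteq\Be$, and for part (3) you actually carry out the substitution $y=x+k-k'$ whereas the paper simply says ``by straightforward computations, similarly to formula \eqref{tkk}.''
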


\begin{proof}
$(1)$ 
Let $f,g\in \Be$ and $a>0$. We have
\begin{equation}
0\leq e^{-ax}|f(x)+g(x)|\leq e^{-ax}|f(x)| + e^{-ax}|g(x)|,\;\forall x>k. 
\end{equation}
By $(2.1)$ and $(2.4)$ it follows that
$$ \lim_{x\rightarrow +\infty}e^{-ax}|f(x)+g(x)|=0, $$
hence $f+g\in \Be$. On the other hand, by $(2.1)$, we have
$$\lim_{x\rightarrow +\infty}e^{-ax}|f(x)g(x)| = \lim_{x\rightarrow +\infty}e^{-\frac{a}{2}x}|f(x)|
\lim_{x\rightarrow +\infty}e^{-\frac{a}{2}x}|g(x)| = 0,$$
hence $f\cdot g \in \Be$. Therefore $\Be$ is a subdomain of $\Ok$.

Let $f,g\in \mathcal I_{k}$. From $(2.2)$, there exists $a>0$ such that
$$\lim_{x\rightarrow +\infty}e^{ax}|f(x)| = \lim_{x\rightarrow +\infty}e^{ax}|g(x)| = 0.$$
Since $e^{ax}|f(x)+g(x)|\leq e^{ax}|f(x)|+e^{ax}|g(x)|$, it follows that
$$\lim_{x\rightarrow +\infty}e^{ax}|f(x)+g(x)|=0,$$
hence $f+g\in\Ie$. Let $f\in \mathcal I_{k}$ and $g\in \mathcal B_{k}$. Let $a>0$ such that 
\begin{equation}
\lim_{x\rightarrow +\infty}e^{ax}|f(x)|=0.
\end{equation}
By $(2.4)$ and $(2.1)$ it follows that
$$\lim_{x \rightarrow +\infty} e^{\frac{a}{2}x}|f(x)g(x)| =  \lim_{x \rightarrow +\infty} e^{ax}|f(x)| e^{-\frac{a}{2}x} |g(x)| = 0,$$
hence $f\cdot g\in \Ie$.

$(2)$ Let $f\in \mathbb C[z]$. It is clear that
$$ \lim_{x\rightarrow +\infty}e^{-ax}|f(x)| = 0,\;\forall a>0, $$ 
hence $f\in\Be$. On the other hand, if $f\neq 0$, then $\lim_{x\rightarrow +\infty}|f(x)|>0$, therefore $f\notin\Ie$.

$(3)$ It follows by straighforward computations, similarly to formula \eqref{tkk}.
\end{proof}

\begin{cor}
 With the above notations, it hold that
\begin{enumerate}
 \item[(1)] $\mathcal B_{\infty}$ is a $\mathbb C[z]$-subalgebra of $\mathcal O_{\infty}$ and $\mathcal I_{\infty}$ is an ideal in $\mathcal B_{\infty}$ with $\mathcal I_{\infty}\cap \mathbb C[z]=\{0\}$.
 \item[(2)] $\mathcal B$ is a $\mathbb C[z]$-subalgebra of $\mathcal O$ and $\mathcal I$ is an ideal in $\mathcal B$ with $\mathcal I\cap \mathbb C[z]=\{0\}$.
\end{enumerate}
\end{cor}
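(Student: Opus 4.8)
The plan is to deduce everything formally from Proposition $2.1$, together with the direct-system/intersection bookkeeping already used for Corollary $1.2$; no new analysis is needed.

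First I would record that the families $\{\Be\}_{k\in\mathbb R}$ and $\{\Ie\}_{k\in\mathbb R}$ are compatible with the inclusions $i_{k,k'}\colon\mathcal O_k\hookrightarrow\mathcal O_{k'}$ for $k\le k'$. If $f\in\Be$, then its restriction to $\{\Ree z>k'\}$ is still holomorphic there, and the defining conditions $\lim_{x\to+\infty}e^{-ax}|f(x)|=0$ (resp. $\lim_{x\to+\infty}e^{ax}|f(x)|=0$ for some $a>0$) concern only the behaviour of $f$ on the real axis near $+\infty$, which is untouched by restriction; hence $i_{k,k'}(\Be)\subseteq\Bep$ and $i_{k,k'}(\Ie)\subseteq\Iep$. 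Thus $(\{\Be\},\{i_{k,k'}\})$ and $(\{\Ie\},\{i_{k,k'}\})$ are direct subsystems of $(\{\mathcal O_k\},\{i_{k,k'}\})$, so, seeing the maps as inclusions, $\mathcal B_{\infty}=\bigcup_k\Be$ and $\mathcal I_{\infty}=\bigcup_k\Ie$ lie in $\mathcal O_{\infty}$, while $\mathcal B=\bigcap_k\Be$ and $\mathcal I=\bigcap_k\Ie$ lie in $\mathcal O$. In fact, since membership in $\Be$ or $\Ie$ depends only on the germ of $f$ at $+\infty$ along the real line, one has $\mathcal B=\mathcal O\cap\Be$ and $\mathcal I=\mathcal O\cap\Ie$ for any single $k$, which already reduces $(2)$ to $(1)$.

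For $(1)$: given $f,g\in\mathcal B_{\infty}$, pick $k,k'$ with $f\in\Be$ and $g\in\Bep$; replacing both indices by $\max\{k,k'\}$ and using the inclusions above, we may assume $f,g\in\Be$ for a common $k$, whence $f+g,fg\in\Be\subseteq\mathcal B_{\infty}$ by Proposition $2.1(1)$, and any $p\in\mathbb C[z]$ lies in $\Be\subseteq\mathcal B_{\infty}$ for every $k$ by Proposition $2.1(2)$; this gives the $\mathbb C[z]$-subalgebra claim. Similarly, for $f\in\mathcal I_{\infty}$ and $g\in\mathcal B_{\infty}$ pass to a common $k$ and apply the ideal part of Proposition $2.1(1)$ to get $f+h,fg\in\Ie\subseteq\mathcal I_{\infty}$ (for $h\in\mathcal I_\infty$ chosen over the same $k$), so $\mathcal I_{\infty}$ is an ideal of $\mathcal B_{\infty}$. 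Finally, if $p\in\mathbb C[z]\cap\mathcal I_{\infty}$, then $p\in\Ie$ for some $k$, so $p\in\mathbb C[z]\cap\Ie=\{0\}$ by Proposition $2.1(2)$; hence $\mathcal I_{\infty}\cap\mathbb C[z]=\{0\}$. Part $(2)$ then follows either from the identifications $\mathcal B=\mathcal O\cap\Be$, $\mathcal I=\mathcal O\cap\Ie$, or directly: if $f,g\in\mathcal B$ then $f,g\in\Be$ for every $k$, so $f+g,fg\in\Be$ for every $k$, i.e. $f+g,fg\in\mathcal B$; the ideal property and $\mathcal I\cap\mathbb C[z]=\{0\}$ are obtained the same way, invoking Proposition $2.1$ for each $k$ (one fixed $k$ suffices for the last assertion).

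I do not expect any genuine obstacle. The only step that is not pure formalism is the observation that restriction along $i_{k,k'}$ preserves the growth conditions defining $\Be$ and $\Ie$ — a one-line remark — after which everything is the same diagram-chase as in Corollary $1.2$.
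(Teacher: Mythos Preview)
Your argument is correct and is precisely the approach the paper takes: its own proof reads simply ``It follows immediately from $(2.3)$ and Proposition $2.1$,'' and you have merely unpacked that sentence. Nothing needs to be added or changed.
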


\begin{proof}
 It follows immediately from $(2.3)$ and Proposition $2.1$.
\end{proof}

\begin{prop}
For any $a>0$, let $f_a(z):=e^{-az}$, $z\in\mathbb C$. It hold that
\begin{enumerate}
\item[(1)] If $f\in \Be$ then $f\in \Ie \Leftrightarrow$ there exists $a>0$ such that $g=\frac{f}{f_a}\in \Ie$.
\item[(2)] If $a<b$ then $f_a\Ie \supsetneq f_b\Be$.
\item[(3)] $\Ie = \sum_{a>0}f_a\Be = \bigcup_{a>0}f_a\Be$.
\item[(4)] Let $(a_n)_{n\geq 1}$ be a sequence of positive real numbers with $\liminf_n a_n=0$. Then
           $\{f_{a_n}:n\geq 1\}$ is a system of generators of the ideal $\Ie$.
\item[(5)] The ideal $\Ie$ is not finitely generated.
\end{enumerate}
\end{prop}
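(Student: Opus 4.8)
The engine of the whole statement is the pair of identities $f_af_b=f_{a+b}$ and $f_a/f_b=f_{a-b}$ (legitimate because every $f_a$ is nonvanishing on $\mathbb C$), combined with two facts already in hand: by Proposition~$2.1$ the set $\mathcal B_k$ is multiplicatively closed, and $f_c\in\mathcal B_k$ for every $c>0$ since $e^{-ax}|f_c(x)|=e^{-(a+c)x}\to0$ as $x\to+\infty$ for all $a>0$. The first thing I would establish is the \emph{nestedness} of the family $\{f_a\mathcal B_k\}_{a>0}$: if $0<a\le b$ and $h\in\mathcal B_k$, then $f_bh=f_a(f_{b-a}h)$ with $f_{b-a}h\in\mathcal B_k$, so $f_b\mathcal B_k\subseteq f_a\mathcal B_k$. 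Thus $\{f_a\mathcal B_k\}_{a>0}$ is a chain of $\mathcal B_k$-submodules of $\mathcal O_k$ which increases as $a\downarrow0$, and in particular $\sum_{a>0}f_a\mathcal B_k=\bigcup_{a>0}f_a\mathcal B_k$, giving one of the two equalities in~(3). I would also record, once and for all, that $f_c\mathcal B_k\subseteq\mathcal I_k$ for each $c>0$: for $h\in\mathcal B_k$ one has $e^{(c/2)x}|f_c(x)h(x)|=e^{-(c/2)x}|h(x)|\to0$.

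To complete~(3) it remains to show $\mathcal I_k\subseteq\bigcup_{a>0}f_a\mathcal B_k$. Given $f\in\mathcal I_k$, fix $a>0$ with $e^{ax}|f(x)|\to0$ and factor $f=f_{a/2}\,g$ with $g:=f/f_{a/2}$, i.e. $g(x)=f(x)e^{ax/2}$. For any $c>0$ then $e^{-cx}|g(x)|=e^{-(a/2+c)x}(e^{ax}|f(x)|)\to0$, so $g\in\mathcal B_k$ and $f\in f_{a/2}\mathcal B_k$. The same factorization also yields the nontrivial implication of~(1): $g=f/f_{a/2}\in\mathcal I_k$, since $e^{(a/2)x}|g(x)|=e^{ax}|f(x)|\to0$. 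The converse of~(1) is immediate, because if $g=f/f_a\in\mathcal I_k$ has witness $b>0$ then $e^{(a+b)x}|f(x)|=e^{bx}|g(x)|\to0$, whence $f\in\mathcal I_k$.

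For~(2), the inclusion $f_b\mathcal B_k\subseteq f_a\mathcal I_k$ comes from $f_bh=f_a(f_{b-a}h)$ together with $f_{b-a}h\in f_{b-a}\mathcal B_k\subseteq\mathcal I_k$ (the recorded fact, as $b-a>0$). Strictness: the element $f_{(a+b)/2}=f_a\cdot f_{(b-a)/2}$ lies in $f_a\mathcal I_k$ since $f_{(b-a)/2}\in\mathcal I_k$, but it is not in $f_b\mathcal B_k$ --- otherwise $f_{(a+b)/2}/f_b=f_{(a-b)/2}$ would lie in $\mathcal B_k$, which is false since $(a-b)/2<0$ forces $e^{-cx}|f_{(a-b)/2}(x)|=e^{((b-a)/2-c)x}\to+\infty$ for $0<c<(b-a)/2$. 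For~(4): the ideal generated by $\{f_{a_n}\}$ is $\sum_nf_{a_n}\mathcal B_k$, which is contained in $\mathcal I_k$ by the recorded fact; conversely, any $f\in\mathcal I_k$ lies in some $f_a\mathcal B_k$ by~(3), and picking $n$ with $a_n<a$ (possible because $\liminf_na_n=0$) gives $f\in f_a\mathcal B_k\subseteq f_{a_n}\mathcal B_k$ by nestedness. Finally~(5): if $\mathcal I_k=(g_1,\dots,g_m)$ as an ideal of $\mathcal B_k$, then by~(3) each $g_i\in f_{a_i}\mathcal B_k$ for some $a_i>0$, so with $a:=\min_ia_i>0$ and nestedness all $g_i\in f_a\mathcal B_k$ and hence $\mathcal I_k\subseteq f_a\mathcal B_k$; but $f_{a/2}\in\mathcal I_k$ while $f_{a/2}/f_a=f_{-a/2}\notin\mathcal B_k$, so $f_{a/2}\notin f_a\mathcal B_k$, a contradiction.

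I do not expect a genuine obstacle here: each item reduces to a one-line exponential estimate. The point that repays care is organizational --- isolating the nestedness of $\{f_a\mathcal B_k\}_{a>0}$ and the inclusion $f_c\mathcal B_k\subseteq\mathcal I_k$ at the outset, so that~(2)--(5) become formal consequences rather than five separate limit computations --- and, in~(3)$\supseteq$ and in~(1), remembering to use the \emph{full} content of a membership $f\in\mathcal I_k$, namely $e^{ax}|f(x)|\to0$ for a specific $a>0$ (not merely $f\in\mathcal B_k$). Throughout, one must also keep the ambient ring $\mathcal B_k$ in view, checking that every cofactor produced lies in $\mathcal B_k$ and not just in $\mathcal O_k$; this is precisely where the multiplicative closure of $\mathcal B_k$ and the membership $f_c\in\mathcal B_k$ $(c>0)$ from Proposition~$2.1$ enter.
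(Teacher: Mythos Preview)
Your proof is correct and follows essentially the same approach as the paper: both arguments rest on the exponential identities $f_af_b=f_{a+b}$, the chain structure of $\{f_a\mathcal B_k\}_{a>0}$, and the single witness $f_{(a+b)/2}$ for strictness. The differences are purely organizational---you isolate nestedness and $f_c\mathcal B_k\subseteq\mathcal I_k$ at the outset and then derive (2)--(5) formally, whereas the paper proves (1) and (2) first and leans on them (somewhat tersely) for (3)--(5); in particular your treatment of (3) is more explicit than the paper's one-line ``totally ordered by inclusion,'' and for (5) you invoke (3) where the paper invokes (1), but the content is the same.
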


\begin{proof}
$(1)$ First, note that $f_a \in \Ie$, for any $a>0$. Let $f\in \Ie$. Then there exists $a>0$ such that
\begin{equation}
\lim_{x\rightarrow +\infty}e^{2ax}|f(x)| = 0.
\end{equation}
 Let $g=\frac{f}{f_a}$, that is $g(z)=e^{az}f(z)$. From $(2.6)$ it follows that 
$$ \lim_{x\rightarrow +\infty}e^{ax}|g(x)| = 0, $$
thus $g \in\Ie$. The other implication is obvious, since $\Ie$ is an ideal.

$(2)$ Let $f\in f_b\Be$. It follows that there exists $g\in\Be$ such that $f(z)=e^{-bz}g(z)$.
      Let $h(z)=e^{(a-b)z}g(z)$. We have $h\in \Ie$ and $f(z)=f_a(z)h(z)$. Thus $f\in f_a\Ie$.
			On the other hand, $f_{\frac{a+b}{2}}\in f_a\Ie \setminus f_b\Be$.

$(3)$ The set $\{ f_a\Be\;:\;a>0\}$ is totally ordered by inclusion.

$(4)$ Is a direct consequence of $(2)$ and $(3)$.

$(5)$ Assume that $\{f_1,\ldots,f_m\}$ is a minimal set of generators of $\Ie$. Then there exists $a>0$ such
      that $f_i(z)=f_a(z)g_i(z)$ with $g_i\in \Ie$, for $1\leq i\leq m$. It follows that $(f_1,\ldots,f_m)\Be \subset f_a\Be \subsetneq \Ie$,
			a contradiction.
\end{proof}

\begin{obs}\emph{
$(1)$ The ideal $\Ie$ is not prime. 
Let $f(z):=\sin(\sin z + 1 + e^{-z})$, $z\in\mathbb C$. It is easy to see that
$$ \limsup_{x\rightarrow +\infty} |f(x)| = 1\;\text{and } \liminf_{x\rightarrow +\infty} |f(x)| = 0,$$
hence $f\in \mathcal B_k \setminus \mathcal I_k$. Let $g:\mathbb R\rightarrow (0,+\infty)$,
$$ g(x)=\begin{cases} \frac{e^{-x}}{f(x)},& x\geq 0 \\ \sin (2),& x<0 \end{cases}.$$
The function $g$ is continuous on $\mathbb R$. According to a theorem of Carleman \cite{carleman}, 
there exists an entire function $h:\mathbb C\rightarrow \mathbb C$ such that
\begin{equation}\label{carle1}
|h(x)-g(x)| < e^{-x},\;\forall x\in\mathbb R. 
\end{equation}
We prove that $h\in \mathcal B_k$ and $fh\in \mathcal I_k$. By straightforward computations, we get
\begin{equation}\label{carle2}
 \limsup_{x\rightarrow +\infty} g(x) = 1\;\text{and } \liminf_{x\rightarrow +\infty} g(x) = 0.
\end{equation}
By \eqref{carle1} and \eqref{carle2} it follows that 
$$ \limsup_{x\rightarrow +\infty} h(x) = 1\;\text{and } \liminf_{x\rightarrow +\infty} h(x) = 0,$$
hence $h\in \Be \setminus \Ie$. On the other hand, from \eqref{carle1} it follows that
$$ |f(x)h(x)| < |f(x)g(x)| + |e^{-x}f(x)| < e^{-x}|1+f(x)| \leq 2e^{-x},\;\forall x\in\mathbb R,$$
hence $fh\in \mathcal I_k$ as required.}

\emph{$(2)$ If $f\in\Be$ is inversible, then $f(z)\neq 0$, for all $\Ree z>k$, and $f\notin \Ie$. The first condition must be satisfied in order that 
$\frac{1}{f}$ to be defined on $\{\Ree z>k\}$. The second condition follows from the fact that $\Ie$ is a proper ideal of $\Be$.
On the other hand, if $f(z):=e^{-(\sin z+1)z}$, $z\in\mathbb C$, then $f\in \Be\setminus \Ie$, $f(z)\neq 0$, for all $z\in\mathbb C$, but $\frac{1}{f}\notin \Be$.}

\emph{$(3)$ The results of Proposition $2.3$ and the previous remarks are valid for $\mathcal B_{\infty}, \mathcal I_{\infty}, \mathcal B$ and $\mathcal I$.}
\end{obs}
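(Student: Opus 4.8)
The plan is to establish the three assertions in turn, the non-primeness of $\Ie$ being the substantive one. To show $\Ie$ is not prime I would exhibit two functions $f,h\in\Be\setminus\Ie$ whose product lies in $\Ie$. The guiding heuristic is that $\Ie$ consists of functions decaying exponentially along $\mathbb R$, while $\Be$ permits any subexponential behaviour; so I want an $f$ that oscillates—staying bounded (hence in $\Be$) but returning near $0$ and near a fixed positive value infinitely often (so $\limsup_{x\to+\infty}|f(x)|>0$, forcing $f\notin\Ie$)—together with a partner $h$ that is comparatively large exactly where $f$ is small, so that $fh$ decays like $e^{-x}$. First I would pick a bounded entire $f$ with $\liminf_{x\to+\infty}|f(x)|=0$ and $\limsup_{x\to+\infty}|f(x)|>0$, arranged so that in addition $f(x)>0$ for all $x\ge 0$; taking a sine of a bounded oscillating argument achieves this. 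Then $g(x):=e^{-x}/f(x)$ is continuous and bounded on $[0,+\infty)$, and I would extend it continuously to all of $\mathbb R$.

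The key device is Carleman's approximation theorem, which furnishes an entire $h$ with $|h(x)-g(x)|<e^{-x}$ for every real $x$. With this in hand the verifications are short: since $g$ is bounded on $[0,+\infty)$ so is $h$, whence $h\in\Be$; since $|h-g|\to 0$ and $\limsup|g|>0$ we get $\limsup_{x\to+\infty}|h(x)|=\limsup_{x\to+\infty}|g(x)|>0$, so $h\notin\Ie$; and finally $|f(x)h(x)|\le |f(x)g(x)|+|f(x)|e^{-x}=e^{-x}+|f(x)|e^{-x}\le 2e^{-x}$, giving $fh\in\Ie$. I expect the delicate point to be engineering $f$ so that it is simultaneously nonvanishing on $[0,+\infty)$ (so that $1/f$, hence $g$, makes sense and stays bounded there) and yet has $\liminf|f|=0$; the resolution is to keep the sine's argument inside an interval on which the sine is positive but touches $0$ only through a summand comparable to $e^{-x}$, so that $|f|$ never decays faster than $e^{-x}$ and $g$ remains bounded.

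For the invertibility statement the two necessary conditions are formal. If $f$ is invertible in $\Be$ then $1/f\in\Be\subset\Ok$, which forces $f(z)\ne 0$ throughout $\{\Ree z>k\}$, since otherwise $1/f$ would have a pole there; and because $\Ie$ is a \emph{proper} ideal of $\Be$—properness follows from $\Ie\cap\mathbb C[z]=\{0\}$ in Proposition $2.1(2)$, as $1\notin\Ie$—no unit can lie in $\Ie$, so $f\notin\Ie$. To see these conditions are not sufficient I would display $f(z):=e^{-(\sin z+1)z}$: it never vanishes, and because $\sin x+1\ge 0$ one checks $e^{-ax}|f(x)|\le e^{-ax}\to 0$ (so $f\in\Be$) while along $x=\tfrac{3\pi}{2}+2\pi n$ one has $e^{ax}|f(x)|=e^{ax}\to\infty$ (so $f\notin\Ie$); yet $1/f(z)=e^{(\sin z+1)z}$ grows like $e^{2x}$ along $x=\tfrac{\pi}{2}+2\pi n$, so $1/f\notin\Be$ and $f$ fails to be invertible.

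Finally, for the extension to $\mathcal B_\infty,\mathcal I_\infty,\mathcal B,\mathcal I$ I would invoke the isomorphisms $T_{k-k'}$ of Proposition $2.1(3)$ together with the direct-limit and intersection descriptions in $(2.3)$: the non-primeness witnesses, the invertibility criteria, and the counterexamples are all preserved under these horizontal translations and under passage to $\bigcup_k$ and $\bigcap_k$, so the corresponding statements follow by routine checking.
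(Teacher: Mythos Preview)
Your proposal is correct and follows essentially the same route as the paper: the same oscillating $f$ (sine of an argument in $(0,\pi)$ that only touches $0$ through an $e^{-x}$-sized summand), the same $g=e^{-x}/f$, the same appeal to Carleman, and the same two-line estimates for $h\in\Be\setminus\Ie$ and $fh\in\Ie$. If anything, you make explicit the point the paper leaves to ``straightforward computations'': that $|f(x)|\gtrsim e^{-x}$ on $[0,\infty)$, which is exactly what guarantees $g$ is bounded and hence that $h$ inherits the right $\limsup/\liminf$ behaviour. One wording nit: ``bounded entire $f$'' should be read as ``entire and bounded on $\mathbb R$'', since a bounded entire function is constant by Liouville.
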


Let $f\in \mathcal O$ be an entire function. If there exist a positive number $\rho$ and
constants $A,B > 0$ such that
\begin{equation}\label{ord}
|f(z)|\leq Ae^{B|z|^{\rho}} \text{ for all }z\in\mathbb C, 
\end{equation}
then we say that f has an \emph{order of growth} $\leq\rho$. We define the \emph{order of growth} of $f$ as
$$\rho(f)=\inf\{\rho>0\;:\;f \text{ has an order of growth }\leq\rho \}.$$
For any $\rho>0$, we let $\mathcal O_{<\rho}$ the set of entire functions or order of growth $<\rho$.
It is easy to check that $\mathcal O_{<\rho}$ is a $\mathbb C$-subalgebra of the algebra of entire functions $\mathcal O$. 
The following result was proved, in a different context, in \cite[Proposition 5]{cim}. In order of 
completeness, we present here the proof.

\begin{prop}
With the above notations, $\mathcal O_{<1}$ is a $\mathbb C$-subalgebra of $\mathcal B$.
Moreover, $\mathcal O_{<1}\cap \mathcal I = \{0\}$
\end{prop}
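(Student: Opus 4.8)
This statement has two independent parts. \emph{For the first part}, recall from the discussion above that $\mathcal O_{<1}$ is already known to be a $\mathbb C$-subalgebra of $\mathcal O$, and that $\mathcal B=\bigcap_{k\in\mathbb R}\Be\subseteq\mathcal O$ carries the restricted operations; hence it is enough to prove the inclusion $\mathcal O_{<1}\subseteq\Be$ for every $k\in\mathbb R$. Given $f\in\mathcal O_{<1}$, it is entire so $f|_{\Ree z>k}\in\Ok$, and choosing $\rho$ with $\rho(f)<\rho<1$ together with constants $A,B>0$ such that $|f(z)|\le Ae^{B|z|^{\rho}}$, one has for real $x>0$ and any $a>0$ the bound $e^{-ax}|f(x)|\le Ae^{\,x(Bx^{\rho-1}-a)}\to 0$, since $\rho<1$ forces the exponent to $-\infty$. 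Thus $f\in\Be$, so $\mathcal O_{<1}\subseteq\mathcal B$ and, being a subalgebra of $\mathcal O$ contained in $\mathcal B$, it is a $\mathbb C$-subalgebra of $\mathcal B$.

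\emph{For the second part} I would argue by contradiction: suppose $0\ne f\in\mathcal O_{<1}\cap\mathcal I$, so there are $a>0$ and $x_0$ with $|f(x)|\le e^{-ax}$ for all real $x\ge x_0$. Put $\rho:=\rho(f)<1$ and fix $\sigma\in(\rho,1)$. By Hadamard's factorization theorem $f(z)=cz^{m}\prod_{n\ge 1}(1-z/a_n)$ with $c\ne 0$, $m\ge 0$, $a_n\ne 0$, the canonical product of genus $0$, hence $\sum_n|a_n|^{-1}<\infty$; moreover the zero-counting function satisfies $\nu(r):=\#\{n:|a_n|\le r\}=O(r^{\sigma})$, since the exponent of convergence of the zeros is $\le\rho<\sigma$. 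The plan is to show that $\log|f(x)|\ge -o(x)$ along a sequence $x\to+\infty$, which is incompatible with $|f(x)|\le e^{-ax}$.

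To do this I split $\log|f(x)|=\log|c|+m\log x+\sum_{|a_n|\le 2x}\log|1-x/a_n|+\sum_{|a_n|>2x}\log|1-x/a_n|$. For $|a_n|>2x$ one has $|x/a_n|<\tfrac12$, hence $\bigl|\log|1-x/a_n|\bigr|\le 2x/|a_n|$, so the last sum is bounded in absolute value by $2x\sum_{|a_n|>2x}|a_n|^{-1}=o(x)$, being $2x$ times the tail of a convergent series. For the middle sum I choose $x$ away from the zeros: the set of $x\in[R,2R]$ with $\operatorname{dist}(x,\{a_n\})<1$ is covered by at most $\nu(4R)=O(R^{\sigma})$ intervals of length $2$, hence has measure $o(R)$, so for all large $R$ there is $x_R\in[R,2R]$ with $|x_R-a_n|\ge 1$ for all $n$; for such $x$ each term with $|a_n|\le 2x$ satisfies $|1-x/a_n|=|a_n-x|/|a_n|\ge\min(1,1/|a_n|)\ge 1/(2x)$, whence that sum is $\ge-\nu(2x)\log(2x)=-O(x^{\sigma}\log x)=o(x)$, using $\sigma<1$. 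Since $m\log x\ge 0$, we obtain $\log|f(x_R)|\ge-o(R)$, i.e. a sequence $x_j\to+\infty$ with $|f(x_j)|\ge e^{-\varepsilon_j x_j}$, $\varepsilon_j\to 0$; choosing $j$ large with $\varepsilon_j<a$ contradicts $|f(x_j)|\le e^{-ax_j}$. Hence $f=0$, and $\mathcal O_{<1}\cap\mathcal I=\{0\}$.

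The crux is entirely in this last part, and the hypothesis ``order $<1$'' enters in exactly two places: it makes the exponent of convergence of the zeros $<1$, so that $\sum|a_n|^{-1}<\infty$ and $\nu(r)=O(r^{\sigma})$ with $\sigma<1$, which is precisely what turns both error sums into genuine $o(x)$ rather than merely $O(x)$ or $o(x\log x)$; and it guarantees that the points $x$ lying far from the zero set have full density in each interval $[R,2R]$. (Alternatively, this part follows from the fact that the $\sigma$-indicator of a nonzero entire function of order $<1$ is finite in every direction, but the elementary argument above is self-contained.)
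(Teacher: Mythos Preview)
Your proof is correct, and structurally it matches the paper's: both parts rely on the growth bound for the inclusion $\mathcal O_{<1}\subset\mathcal B$, and for the second part both use Hadamard's factorization to write $f(z)=cz^m\prod_n(1-z/a_n)$ and then establish a lower bound for $|f|$ along a real sequence $x_k\to+\infty$ that is incompatible with exponential decay.

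The difference is in how the lower bound is obtained. The paper simply invokes \cite[Corollary 5.4]{stein}, which furnishes a sequence $(x_k)$ with $\bigl|\prod_n(1-x_k/a_n)\bigr|\ge e^{-B'x_k^{\rho}}$, and then observes that $e^{ax_k}|f(x_k)|\ge |c|x_k^m e^{ax_k-B'x_k^{\rho}}\to+\infty$. You instead prove this estimate from scratch: splitting the zeros at $|a_n|=2x$, bounding the tail by $2x\sum_{|a_n|>2x}|a_n|^{-1}=o(x)$ via convergence of $\sum|a_n|^{-1}$, and handling the near zeros by a density argument to find $x_R\in[R,2R]$ at distance $\ge 1$ from every $a_n$, so that each near factor contributes at worst $-\log(2x)$ and the total is $-\nu(2x)\log(2x)=o(x)$. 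Your conclusion $\log|f(x_R)|\ge -o(x_R)$ is formally weaker than the paper's cited $\ge -B'x_R^{\rho}$, but it is exactly what is needed to contradict $|f(x)|\le e^{-ax}$. The paper's route is shorter but relies on an external reference; yours is self-contained and makes transparent where the hypothesis $\rho(f)<1$ is used (convergence of $\sum|a_n|^{-1}$ and $\nu(r)=O(r^{\sigma})$ with $\sigma<1$). Note also that the paper treats the polynomial case separately via Corollary~2.2(2), whereas your argument absorbs it automatically (finite product, $\nu$ eventually constant).
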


\begin{proof}
Let $f\in\mathcal O_{<1}$ and assume that $f$ has an order of growth $\leq\rho<1$.
Let $a>0$ and $x>0$. According to \eqref{ord}, there exist two constants $A,B>0$ such that
$$ e^{-ax}|f(x)|\leq Ae^{-ax+Bx^{\rho}},\;\text{ so } \lim_{x\rightarrow+\infty}e^{-ax}|f(x)|=0,$$
hence $f\in\mathcal B$. If $f$ is polynomial then, by Corollary $2.2(2)$, $f\in \mathcal I \Leftrightarrow f=0$.
Assume $f$ is not polynomial. Then, by Hadamard's Theorem, there exists $D\in\mathbb C^*$ such that
\begin{equation}\label{cutu1}
f(z)=Dz^mE(z),\;\text{ with }E(z)=\prod_{n=1}^{+\infty}(1-\frac{z}{z_n}),z\in\mathbb C,
\end{equation}
where $m$ is the multiplicity of $z_0=0$ as zero of $f$ and $z_1,z_2,\ldots$ are the non-zero zeros of $f$.
According to \cite[Corollary 5.4]{stein}, there exists a strictly increasing sequence $(x_k)_{k\geq 1}$ of positive numbers with
$\lim_{k\rightarrow+\infty}x_k=+\infty$ and a constant $B'>0$ such that
\begin{equation}\label{cutu2}
|E(x_k)|\geq e^{-B'x_k^{\rho}},\;\forall k\geq 1.
\end{equation}
Let $a>0$. From \eqref{cutu1} and \eqref{cutu2} it follows that
$$e^{ax_k}|f(x_k)| = e^{ax_k}|D|x_k^m|E(x_k)| \geq |D|x_k^me^{ax_k-B'x_k^{\rho}} \rightarrow +\infty,$$
hence $f\notin \mathcal I$.
\end{proof}

\noindent
The following lemma, which generalize \cite[Lemma 1]{cim}, is a key part in the proof of Theorem $2.7$.

\begin{lema}
Let $\alpha\in\Omega^f(\NN,\mathcal O_{\infty})=\bigcup_{k\in\mathbb R}\Omega(\NN,\mathcal O_k)$ with $\alpha(n)\notin \mathcal I_{\infty}\setminus\{0\}$, for all $n\geq 1$, 
such that there exists $k\in \mathbb R$ and a continuous function $M:\{\Ree z> k\}\rightarrow [0,+\infty)$
which satisfies 
\begin{enumerate}
 \item[(i)] $|\alpha(n)(z)|\leq M(z)n^{k}$, for all $n\geq 1,\Ree z>k$.
\item[(ii)] $\lim_{x\rightarrow+\infty} e^{-ax}M(x) = 0$, for all $a>0$.
\end{enumerate}
The following hold
\begin{enumerate}
 \item[(1)] $\alpha\in\mathcal C_k \cap \Omega(\NN,\mathcal B_k)$.
 \item[(2)] Let $L \in\widetilde{\mathcal E}_{\infty}$. If $\alpha(n)\notin \Ie\setminus\{0\}$, for all $n\geq 1$,
       and $F_L(\alpha)(z):=\sum_{n=1}^{+\infty}\alpha(n)(z)L(n)(z)$ is identically zero, then $\alpha=0$.
\end{enumerate}
\end{lema}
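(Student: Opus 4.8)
The plan splits along the two assertions. For $(1)$ only the two bounds in the hypothesis are needed: for every $\varepsilon>0$, $n\geq 1$ and $\Ree z>k$ one has $|\alpha(n)(z)|\leq M(z)n^{k}\leq M(z)n^{k+\varepsilon}<(M(z)+1)n^{k+\varepsilon}$, and $z\mapsto M(z)+1$ is continuous and nonnegative on $\{\Ree z>k\}$, so $\alpha\in\mathcal C_k$. Restricting (i) to the real line and using (ii), $0\leq e^{-ax}|\alpha(n)(x)|\leq n^{k}e^{-ax}M(x)\to 0$ as $x\to+\infty$ for each fixed $n$ and each $a>0$; hence $\alpha(n)\in\Be$ and $\alpha\in\Omega(\NN,\Be)$.

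For $(2)$ I would first reduce to a single half-plane: picking $k_1$ with $L\in\widetilde{\mathcal E}_{k_1}$ and replacing $k$ by $\max\{k,k_1\}$ — harmless, since this only weakens (i), (ii), while membership in $\Ie$ depends only on the behaviour at $+\infty$ — we may assume $L\in\widetilde{\mathcal E}_k$. By $(1)$, $\alpha\in\mathcal C_k$, so Proposition $1.4$ gives $k'\geq k$ with $F_L(\alpha)\in\mathcal O_{k'}$, the defining series converging absolutely on $\{\Ree z>k'\}$. Now argue by contradiction: assume $\alpha\neq 0$ and let $n_0$ be the least index with $\alpha(n_0)\neq 0$, so that by hypothesis and $(1)$ we have $\alpha(n_0)\in\Be\setminus\Ie$. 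The terms of index $<n_0$ vanish, hence $\sum_{n\geq n_0}\alpha(n)(z)L(n)(z)\equiv 0$ on $\{\Ree z>k'\}$, and dividing by $L(n_0)(z)$ (which is nonzero on all of $\{\Ree z>k\}$) yields, for real $x>k'$,
\[
\alpha(n_0)(x)=-\sum_{n>n_0}\alpha(n)(x)\,\frac{L(n)(x)}{L(n_0)(x)}.
\]

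The core of the proof is estimating this tail. Let $C(n_0)>0$ be the constant attached to $n_0$ in the definition \eqref{ekt} of $\widetilde{\mathcal E}_k$, so $|L(n)(x)|/|L(n_0)(x)|\leq n^{-C(n_0)x}$ for $n\geq n_0+1$. Using (i) together with the elementary bound $n^{k-C(n_0)x}=n^{-2}\,n^{-(C(n_0)x-k-2)}\leq n^{-2}(n_0+1)^{-(C(n_0)x-k-2)}$, valid for $n\geq n_0+1$ once $C(n_0)x\geq k+2$, one obtains for all large real $x$
\[
|\alpha(n_0)(x)|\leq M(x)\sum_{n>n_0}n^{k-C(n_0)x}\leq c_0\,(n_0+1)^{k+2}\,M(x)\,e^{-ax},
\]
where $c_0:=\sum_{n\geq 1}n^{-2}$ and $a:=C(n_0)\log(n_0+1)>0$ (positivity since $n_0\geq 1$). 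Multiplying by $e^{(a/2)x}$ and invoking (ii) forces $e^{(a/2)x}|\alpha(n_0)(x)|\to 0$ as $x\to+\infty$, i.e. $\alpha(n_0)\in\Ie$, contradicting $\alpha(n_0)\in\Be\setminus\Ie$. Therefore no such $n_0$ exists and $\alpha=0$.

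The step I expect to be the main obstacle is precisely this tail estimate: one must extract from $\sum_{n>n_0}n^{k-C(n_0)x}$ the exact exponential decay rate $e^{-C(n_0)\log(n_0+1)x}$, uniformly for large $x$, and then check that this rate dominates the subexponential growth allowed for $M$ by (ii), so that $\alpha(n_0)$ is forced into the ideal $\Ie$. The remaining ingredients — the reduction to a common half-plane through the restriction maps, the use of Proposition $1.4$, and the termwise division by $L(n_0)$ — are routine.
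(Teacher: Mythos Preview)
Your proof is correct and follows the same strategy as the paper: for $(2)$, isolate the first nonzero coefficient $\alpha(n_0)$, divide through by $L(n_0)$, bound the remaining tail via hypothesis (i) and the $\widetilde{\mathcal E}$-condition, and conclude that $\alpha(n_0)\in\Ie$, a contradiction. The only cosmetic difference is in how the tail $\sum_{n>n_0} n^{k-C(n_0)x}$ is controlled: you pull off $n^{-2}$ and bound the remaining factor by its value at $n=n_0+1$, obtaining the explicit rate $a=C(n_0)\log(n_0+1)$, whereas the paper instead absorbs a factor $e^{2ax}$ into the sum (using $n\geq 2$) and keeps the sum bounded by choosing $2a<C(n_0)$; both lead to the same conclusion.
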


\begin{proof}
$(1)$ The hypothesis $(i)$ implies $\alpha\in \mathcal C_k$. Also, $(i)$ and $(ii)$ implies $\alpha(n) \in\mathcal B_k$, for all $n\geq 1$.

$(2)$ Note that, according to Proposition $1.4$, $F_L(\alpha)$ is defined on $\Ree z\gg 0$.
      Let $n_0$ be the smallest integer with $\alpha(n_0)\neq 0$. Since $F_L(\alpha)=0$, for any $x\gg 0$ we have that
\begin{equation}\label{coor}
 |\alpha(n_0)(x)| = \left| \sum_{n=n_0+1}^{+\infty}\alpha(n)(x)\frac{L(n)(x)}{L(n_0)(x)} \right| \leq \sum_{n=n_0+1}^{+\infty}|\alpha(n)(x)|\left|\frac{L(n)(x)}{L(n_0)(x)} \right|.
\end{equation}
Since $L \in\widetilde{\mathcal E}_{\infty}$, by \eqref{ekt}, there exists $C(n_0)>0$ such that 
\begin{equation}\label{pulas}
 \left|\frac{L(n)(x)}{L(n_0)(x)}\right| \leq n^{-C(n_0)x}, \forall x \gg 0,\;n\geq n_0+1 
\end{equation}
From \eqref{coor} and \eqref{pulas} it follows that
\begin{equation}\label{pulaas}
 |\alpha(n_0)(x)| \leq M(x) \sum_{n=n_0+1}^{+\infty} e^{(-C(n_0)x+k)\log n}, \forall x\gg 0.
\end{equation}
Let $0<2a<C(n_0)$. From \eqref{pulaas} it follows that
\begin{equation}\label{pulaaa}
 e^{ax}|\alpha(n_0)(x)| \leq e^{-ax}M(x) \sum_{n=n_0+1}^{+\infty} e^{((2a-C(n_0))x+k)\log n}, \forall x\gg 0.
\end{equation}
Taking $\lim_{x\rightarrow +\infty}$ in \eqref{pulaaa}, by hypothesis $(ii)$, it follows that 
$$\lim_{x\rightarrow +\infty}e^{ax}|\alpha(n_0)(x)| = 0,$$
hence $\alpha(n_0)\in \mathcal I_{\infty}$. Therefore $\alpha(n_0)=0$.
\end{proof}

Let $\mathcal A_{\infty} \subset (\mathcal B_{\infty}\setminus \mathcal I_{\infty})\cup \{0\}$ be a 
$\mathbb C$-subalgebra of $\mathcal B_{\infty}$. (According to Proposition $2.5$ we can choose $\mathcal A_{\infty}$ to be the domain of entire functions of order $<1$). Let $\mathcal A_k:=\mathcal A_{\infty}\cap \mathcal O_k$, 
$k\in\mathbb R$, and $\mathcal A:=\mathcal A_{\infty}\cap \mathcal O$.
In the $\mathcal A_{k}$-algebra $\Omega(\NN,\mathcal A_{k})$ we consider the $\mathcal A_{k}$-subalgebra defined by
$$
\mathcal H_{k}:=\{ \alpha\in \Omega(\NN,\mathcal A_{k})\;:\;\exists M:\{\Ree z> k\}\rightarrow [0,+\infty) \text{ continuous } 
$$
\begin{equation}
\text{ such that (i)} |\alpha(n)(z)|\leq M(z)n^{k}, \forall n\geq 1,\Ree z>k,\text{ (ii) }\lim_{x\rightarrow+\infty} e^{-ax}M(x) = 0 \forall a>0\}.
\end{equation}
Let $\mathcal H_{\infty}:=\bigcup_{k\in\mathbb R}\mathcal H_k$ and $\mathcal H:=\bigcap_{k\in\mathbb R}\mathcal H_k$.
Note that, from $(1.11)$ and $(2.16)$ it follows that $\mathcal D_k\subset \mathcal H_k$, for all $k\in\mathbb R$, hence $\mathcal D_{\infty}\subset \mathcal H_{\infty} \text{ and } 
\mathcal D\subset \mathcal H$. 

\begin{teor}
Let $\LL \in \widetilde{\mathcal E}_{\infty}$. It hold that
\begin{enumerate}
\item[(1)] The map $F_{L}:\mathcal H_{\infty} \rightarrow \mathcal O_{\infty}$ is an injective morphism of $\mathcal A_{\infty}$-modules.
\item[(2)] If $\LL(ab)=\LL(a)\LL(b)$, for all $a,b\in\NN$, and $\LL\neq 0$, then
the map $F_L$ is an injective morphism of $\mathcal A_{\infty}$-algebras.
\end{enumerate}
\end{teor}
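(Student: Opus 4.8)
The plan is to deduce Theorem 2.7 from the groundwork already laid, essentially by assembling Proposition 1.4, Theorem 1.5 and Lemma 2.6. First I would observe that $\mathcal H_\infty \subset \mathcal C_\infty$: indeed, condition (i) in the definition of $\mathcal H_k$ says exactly that $\alpha(n)$ is bounded by $M(z)n^k$, which in particular gives the $\mathcal C_k$-bound $|\alpha(n)(z)| < n^{k+\varepsilon}M(z)$ for every $\varepsilon > 0$, so $\alpha \in \mathcal C_k$ and hence $\mathcal H_\infty \subset \mathcal C_\infty$. Consequently, for $L \in \widetilde{\mathcal E}_\infty \subset \mathcal E_\infty$, Proposition 1.4 applies and guarantees that $F_L(\alpha)(z) = \sum_{n\ge 1}\alpha(n)(z)L(n)(z)$ converges uniformly absolutely on compacts of $\Ree z > k'$ for a suitable $k' = k'(L)$, so $F_L(\alpha) \in \mathcal O_{k'} \subset \mathcal O_\infty$ is well-defined. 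This also shows that $F_L$ indeed maps $\mathcal H_\infty$ into $\mathcal O_\infty$.

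Next, for part (1), the $\mathcal A_\infty$-linearity of $F_L$ is immediate from the computations already carried out in the proof of Theorem 1.5(1): for $\alpha,\beta \in \mathcal H_\infty$ and $f \in \mathcal A_\infty \subset \mathcal O_\infty$, termwise rearrangement of absolutely convergent series gives $F_L(\alpha+\beta) = F_L(\alpha) + F_L(\beta)$ and $F_L(f\cdot\alpha) = f\cdot F_L(\alpha)$, where the scalar multiplication on $\Omega(\NN,\mathcal A_k)$ is $(f\cdot\alpha)(n) = f\cdot\alpha(n)$. The only genuinely new ingredient is injectivity. Here I would invoke Lemma 2.6(2) directly: any $\alpha \in \mathcal H_\infty$ has, by definition, $\alpha(n) \in \mathcal A_k \subset (\mathcal B_\infty\setminus\mathcal I_\infty)\cup\{0\}$ for all $n$, so in particular $\alpha(n) \notin \mathcal I_\infty\setminus\{0\}$; moreover $\alpha$ satisfies hypotheses (i) and (ii) of Lemma 2.6 by construction. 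Therefore, if $F_L(\alpha)$ is identically zero, Lemma 2.6(2) forces $\alpha = 0$. Since $F_L$ is $\mathcal A_\infty$-linear, this kernel-triviality is exactly injectivity.

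For part (2), under the extra hypothesis $L(ab) = L(a)L(b)$ and $L \neq 0$, I would first recall from the proof of Theorem 1.5(2) that $L(1)$ is then the constant function $1$, and that the Dirichlet-convolution identity $F_L(\alpha\cdot\beta) = F_L(\alpha)F_L(\beta)$ holds for $\alpha,\beta \in \mathcal C_\infty$ by the same absolute-convergence-and-rearrangement argument (splitting $\sum_n\sum_{ab=n}$ into $\sum_a\sum_b$). Since $\mathcal H_\infty \subset \mathcal C_\infty$, this multiplicativity restricts to $\mathcal H_\infty$; one should also note $F_L(e) = L(1) = 1$, so $F_L$ is a unital ring morphism, and combined with the $\mathcal A_\infty$-linearity of part (1) it is a morphism of $\mathcal A_\infty$-algebras. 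Injectivity is again Lemma 2.6(2), unchanged.

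The one point that requires a little care — and which I would flag as the main (minor) obstacle — is verifying that $\mathcal H_\infty \subset \mathcal C_\infty$ at the level of the direct limits rather than just for a fixed $k$: one must check that if $\alpha \in \mathcal H_k$ then $\alpha \in \mathcal C_k$ with the \emph{same} $k$, which is clear since the single bound $M(z)n^k$ dominates $n^{k+\varepsilon}$-type bounds with $M_\varepsilon := M$ independent of $\varepsilon$, so no shift of the half-plane is needed. Everything else is a transcription of the fixed-$k$ arguments (Proposition 1.4, Theorem 1.5, Lemma 2.6) through the unions defining $\mathcal H_\infty$, $\mathcal C_\infty$, $\widetilde{\mathcal E}_\infty$ and $\mathcal O_\infty$, which is routine because any two functions $\alpha$, $L$ live in a common $\mathcal H_k$, $\widetilde{\mathcal E}_k$ for $k$ large enough.
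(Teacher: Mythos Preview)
Your proof is correct and follows exactly the same route as the paper, which simply writes ``It follows from Theorem 1.5, (2.16) and Lemma 2.6''; you have merely unpacked that one-line citation in detail. The inclusion $\mathcal H_\infty\subset\mathcal C_\infty$, the $\mathcal A_\infty$-linearity/multiplicativity via Theorem 1.5, and the injectivity via Lemma 2.6(2) using $\mathcal A_k\subset(\mathcal B_\infty\setminus\mathcal I_\infty)\cup\{0\}$ are precisely the ingredients the paper intends.
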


\begin{proof}
It follows from Theorem $1.5$, $(2.16)$ and Lemma $2.6$.
\end{proof}

Let $\mathcal F_{\infty}$ be the quotient field of $\mathcal A_{\infty}$. Let $\mathcal F_k$ bet the quotient field of $\mathcal A_k$
and $\mathcal F$ be the quotient field of $\mathcal A$. It is easy to check that $\mathcal F_{\infty}=\bigcup_{k\in\mathbb R}\mathcal F_k$ and $\mathcal F =\bigcap_{k\in\mathbb R}\mathcal F_k$.
Note that, if $\mathcal A_{\infty}=\mathcal A=\mathcal O_{<1}$ is the $\mathbb C$-algebra of entire functions of order $<1$, then $\mathcal F_{\infty}=\mathcal F$ is the field of meromorphic functions of order $<1$.

\begin{cor}
Let $\alpha_1,\ldots,\alpha_r \in \mathcal H_{\infty}$. Assume there exists a nondiscrete subset $S\subset \{\Ree z\gg 0\}$ such that
the numerical sequences $n\mapsto \alpha_j(n)(z)$, $1\leq j\leq r$, are linearly independent over $\mathbb C$, for any $z\in S$.
Let $ L\in\widetilde{\mathcal E}_{\infty}$. Then
\begin{enumerate}
\item[(1)] $F_L(\alpha_1),\ldots,F_L(\alpha_r)\in \mathcal O_{\infty}$ are linearly independent over $\mathcal F_{\infty}$.
\item[(2)] Moreover, if $L(ab)=L(a)L(b)$, for all $a,b\in\NN$, $L\neq 0$, and $n\mapsto \alpha_j(n)(z)$, $1\leq j\leq r$, are algebraically independent over $\mathbb C$, for any $z\in S$,
then $F_L(\alpha_1),\ldots,F_L(\alpha_r)\in \mathcal O_{\infty}$ are algebraically independent over $\mathcal F_{\infty}$.
\end{enumerate} 
\end{cor}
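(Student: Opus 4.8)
The plan is, in both parts, to suppose a nontrivial dependence relation of $F_L(\alpha_1),\dots,F_L(\alpha_r)$ over $\mathcal F_{\infty}$, clear denominators so that all coefficients lie in $\mathcal A_{\infty}$, use the $\mathcal A_{\infty}$-(multi)linearity of $F_L$ supplied by Theorem $2.7$ to pull the whole combination inside a single application of $F_L$, invoke injectivity of $F_L$ to transfer the relation to the sequences themselves inside $\Omega(\NN,\mathcal A_{\infty})$, evaluate at a point of $S$, and finally combine the hypothesis at that point with the identity theorem for holomorphic functions. Here one uses that $\mathcal O_{\infty}$ is a domain containing $\mathcal A_{\infty}$, so that (algebraic) independence over $\mathcal F_{\infty}$ is the same as over $\mathcal A_{\infty}$.

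For $(1)$, suppose $\sum_{j=1}^{r}c_jF_L(\alpha_j)=0$ with $c_j\in\mathcal F_{\infty}$ not all zero. Multiplying by a common denominator $b\in\mathcal A_{\infty}\setminus\{0\}$, we may replace $c_j$ by $a_j:=bc_j\in\mathcal A_{\infty}$, still not all zero since $\mathcal A_{\infty}$ is a domain. Choose $\ell\in\mathbb R$ large enough that all the $a_j$ and all the $\alpha_j$ are defined on $\Ree z>\ell$; then each scalar product $a_j\alpha_j$ lies in $\mathcal H_{\ell}$ (its values lie in the algebra $\mathcal A_{\ell}$, and conditions $(i)$, $(ii)$ for $a_j\alpha_j$ follow from those for $\alpha_j$ together with $a_j\in\mathcal B_{\ell}$, splitting $e^{-ax}=e^{-ax/2}e^{-ax/2}$). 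Since $\mathcal H_{\ell}$ is closed under addition, $\sum_j a_j\alpha_j\in\mathcal H_{\ell}\subset\mathcal H_{\infty}$, so by $\mathcal A_{\infty}$-linearity of $F_L$ we have $F_L(\sum_j a_j\alpha_j)=\sum_j a_jF_L(\alpha_j)=0$, and injectivity (Theorem $2.7(1)$) forces $\sum_j a_j\alpha_j=0$ in $\Omega(\NN,\mathcal A_{\infty})$. Hence $\sum_{j}a_j(z)\alpha_j(n)(z)=0$ for every $n\ge1$ and every $z$ in the relevant half-plane. Fixing $z\in S$, the numbers $a_j(z)\in\mathbb C$ give a linear relation among the sequences $n\mapsto\alpha_j(n)(z)$, which are linearly independent over $\mathbb C$; hence $a_j(z)=0$ for all $j$. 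Since $S$ is nondiscrete and each $a_j$ is holomorphic, the identity theorem yields $a_j\equiv0$ for all $j$, a contradiction.

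For $(2)$, suppose $P(F_L(\alpha_1),\dots,F_L(\alpha_r))=0$ for some nonzero $P=\sum_{\mathbf d}a_{\mathbf d}X_1^{d_1}\cdots X_r^{d_r}\in\mathcal F_{\infty}[X_1,\dots,X_r]$; clearing denominators as above we may assume $a_{\mathbf d}\in\mathcal A_{\infty}$, not all zero. When $L$ is a monoid morphism, $F_L$ is an $\mathcal A_{\infty}$-algebra morphism (Theorem $2.7(2)$), so writing $\alpha^{\mathbf d}:=\alpha_1^{d_1}\cdots\alpha_r^{d_r}$ with all products and powers taken in the ring $\Omega(\NN,\mathcal A_{\infty})$ (so that $a_{\mathbf d}\alpha^{\mathbf d}$ stays in some $\mathcal H_{\ell}$, because $\mathcal H_{\ell}$ is an $\mathcal A_{\ell}$-subalgebra), we get $F_L(\sum_{\mathbf d}a_{\mathbf d}\alpha^{\mathbf d})=\sum_{\mathbf d}a_{\mathbf d}\prod_j F_L(\alpha_j)^{d_j}=0$, hence $\sum_{\mathbf d}a_{\mathbf d}\alpha^{\mathbf d}=0$ in $\Omega(\NN,\mathcal A_{\infty})$ by injectivity. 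Evaluating at a point $z\in S$, and using that $\gamma\mapsto\gamma(\cdot)(z)$ is a ring homomorphism $\Omega(\NN,\mathcal O_{\ell})\to\Omega(\NN,\mathbb C)$ (it commutes with the Dirichlet convolution), we obtain $\sum_{\mathbf d}a_{\mathbf d}(z)\beta^{\mathbf d}=0$ in $\Omega(\NN,\mathbb C)$, where $\beta_j:=\alpha_j(\cdot)(z)$; in other words, the polynomial $\sum_{\mathbf d}a_{\mathbf d}(z)X^{\mathbf d}$ vanishes at $(\beta_1,\dots,\beta_r)$. Since these sequences are algebraically independent over $\mathbb C$, all $a_{\mathbf d}(z)=0$; letting $z$ run over the nondiscrete set $S$ and applying the identity theorem gives $a_{\mathbf d}\equiv0$ for every $\mathbf d$, contradicting $P\ne0$.

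The only steps requiring genuine care are the verifications that the auxiliary sequences ($\sum_j a_j\alpha_j$, resp. $\sum_{\mathbf d}a_{\mathbf d}\alpha^{\mathbf d}$) still belong to $\mathcal H_{\infty}$, so that Theorem $2.7$ applies, and that pointwise evaluation commutes with Dirichlet convolution; both are routine. Beyond Theorem $2.7$, the essential input is just the fact that a holomorphic function vanishing on a nondiscrete set vanishes identically, which is exactly what converts the pointwise $\mathbb C$-(in)dependence hypothesis on $S$ into (in)dependence over $\mathcal F_{\infty}$. I do not expect any serious obstacle.
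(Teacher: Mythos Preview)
Your proposal is correct and follows essentially the same approach as the paper: clear denominators to land in $\mathcal A_{\infty}$, use Theorem~2.7 to pass between relations on the $F_L(\alpha_j)$ and relations on the $\alpha_j$ themselves, evaluate at points of $S$, and apply the identity theorem. The only cosmetic difference is that the paper first shows the $\alpha_j$ (resp.\ the monomials $\alpha^{\mathbf d}$) are independent over $\mathcal A_{\infty}$ and then invokes Theorem~2.7, whereas you start from a relation on the $F_L(\alpha_j)$ and pull it back; you are also more explicit about the routine checks (membership in $\mathcal H_{\ell}$ and that pointwise evaluation is a ring homomorphism on $\Omega(\NN,\mathcal O_{\ell})$), which the paper leaves implicit.
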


\begin{proof}
$(1)$ Let $g_1,\ldots,g_r \in \mathcal F_{\infty}$ such that
$g_1\alpha_1 + \cdots + g_r\alpha_r = 0$. Multiplying this with a common multiple of the denominators of $g_1,\ldots,g_r$, we
can assume that $g_1,\ldots,g_r\in \mathcal A_{\infty}$. It follows that 
\begin{equation}\label{kuku}
g_1(z)\alpha_1(n)(z)+\cdots+g_r(z)\alpha_r(n)(z) = 0,\;\forall \Ree z\gg 0, n\geq 1.
\end{equation}
The hypothesis and \eqref{kuku} implies $g_1(z)=\cdots=g_r(z)=0$, for all $z\in S$, hence, by the identity
theorem of holomorphic functions, it follows that $g_1=\cdots=g_r=0$, therefore $\alpha_1,\ldots,\alpha_r$
are linearly independent over $\mathcal A_{\infty}$. Now, Theorem $2.7(1)$ implies $(1)$.

$(2)$ Let $I\subset \mathbb N^r$ be a finite subset of indices and let $g_i\in\mathcal A_{\infty}$, $i\in I$.
Assume that $$\sum_{(i_1,\ldots,i_r)\in I}g_i(z) (\alpha_1^{i_1}\cdots \alpha_r^{i_r})(n)(z) = 0,\;\forall \Ree z\gg 0,n\geq 1.$$ 
The hypothesis implies $g_i(z)=0$, for all $z\in S$ and $i\in I$, hence the holomorphic functions $g_i$'s are identically zero.
The conclusion follows from Theorem $2.7(2)$.
\end{proof}

\begin{cor}
Let $\alpha_1,\ldots,\alpha_r\in \mathcal D_{\infty}$, linearly independent over $\mathbb C$, and 
let $L\in\widetilde{\mathcal E}_{\infty}$. Then $F_L(\alpha_1),\ldots$, $F_L(\alpha_r)\in \mathcal O_{\infty}$ are linearly independent over 
$\mathcal F_{\infty}$. Moreover, if $L(ab)=L(a)L(b)$, for all $a,b\in\NN$, $L\neq 0$, and 
$\alpha_1,\ldots,\alpha_r\in \mathcal D_{\infty}$ are algebraically independent over $\mathbb C$,
then $F_L(\alpha_1),\ldots$, $F_L(\alpha_r)\in \mathcal O_{\infty}$ are algebraically independent over 
$\mathcal F_{\infty}$.
\end{cor}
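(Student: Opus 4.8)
The plan is to obtain this statement as an immediate specialization of Corollary $2.8$. First I would recall, from the remark following $(2.16)$, that $\mathcal D_{\infty}\subset \mathcal H_{\infty}$; hence $\alpha_1,\ldots,\alpha_r\in\mathcal H_{\infty}$ and, by Proposition $1.4$, $F_L(\alpha_1),\ldots,F_L(\alpha_r)$ are well-defined elements of $\mathcal O_{\infty}$. The one point that must be isolated is the following trivial observation: an element $\alpha\in\mathcal D_{\infty}\subset\Omega(\NN,\mathbb C)$ is a sequence of \emph{constants}, so $\alpha(n)(z)=\alpha(n)\in\mathbb C$ does not depend on $z$; more generally, every Dirichlet monomial $(\alpha_1^{i_1}\cdots\alpha_r^{i_r})(n)(z)$ is a complex number independent of $z$. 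Consequently, for every $z$ in the common domain of the $F_L(\alpha_j)$, the numerical sequence $n\mapsto\alpha_j(n)(z)$ equals $\alpha_j$ itself.

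Next I would fix any nondiscrete subset $S\subset\{\Ree z\gg 0\}$ lying in that common domain --- e.g. a nondegenerate real interval, or a small disc --- and feed it into Corollary $2.8$. By the observation above, if $\alpha_1,\ldots,\alpha_r$ are linearly independent over $\mathbb C$ then so are the sequences $n\mapsto\alpha_j(n)(z)$, for \emph{every} $z\in S$; Corollary $2.8(1)$ then immediately yields that $F_L(\alpha_1),\ldots,F_L(\alpha_r)$ are linearly independent over $\mathcal F_{\infty}$. For the second assertion I would argue identically: assuming $L$ is multiplicative, $L\neq 0$, and the $\alpha_j$ algebraically independent over $\mathbb C$ in $\mathcal D_{\infty}$, the same constancy-in-$z$ remark shows the sequences $n\mapsto\alpha_j(n)(z)$ are algebraically independent over $\mathbb C$ for every $z\in S$, and Corollary $2.8(2)$ concludes.

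There is essentially no obstacle: the statement is a genuine corollary. The only thing worth checking carefully is that the notion of (linear, respectively algebraic) independence over $\mathbb C$ used for arithmetic functions in $\mathcal D_{\infty}$ --- taken with respect to the Dirichlet-convolution structure --- matches the hypothesis phrasing of Corollary $2.8$, which speaks of the numerical sequences $n\mapsto\alpha_j(n)(z)$; as just noted, for $z$-independent sequences these coincide verbatim, so nothing is lost. One should also make sure the chosen $S$ lies in a half-plane on which all the finitely many $F_L(\alpha_j)$ are simultaneously defined, which is automatic once one fixes a level $k$ large enough for $L$ and for $\alpha_1,\ldots,\alpha_r$ and applies Proposition $1.4$.
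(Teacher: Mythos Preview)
Your proposal is correct and matches the paper's own proof exactly: the paper simply writes ``It follows from Corollary $2.8$ and the inclusion $\mathcal D_{\infty}\subset \mathcal H_{\infty}$.'' Your additional unpacking --- that $\alpha_j(n)(z)=\alpha_j(n)$ is constant in $z$, so the hypothesis of Corollary $2.8$ is satisfied for \emph{any} nondiscrete $S$ --- is precisely the content that makes this inclusion do the work, and is left implicit in the paper.
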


\begin{proof}
It follows from Corollary $2.8$ and the inclusion $\mathcal D_{\infty}\subset \mathcal H_{\infty}$.
\end{proof}

Note that, according to Remark $1.6$, the case of (general) Dirichlet series is contained in Corollary $2.9$.

\section{Applications to Dirichlet series associated to multiplicative functions}

Given an arithmetic function $\alpha\in\Omega(\NN,\mathbb C)$ and a non-negative integer $j$ its arithmetic $j$-derivative is
$$\alpha^{(j)}(n) := (-1)^j \alpha(n) \log^j n,\;\forall n\geq 1.$$
Assume $\alpha\in\mathcal D_k$. Since $\log n$ has the order of growth $O(n^{\varepsilon})$ for any $\varepsilon>0$,
it follows, by straighforward computations, that $\alpha^{{(j)}}\in\mathcal D_k$. Moreover, if $\alpha\in \mathcal D_k$, then
the $j$-derivative of the Dirichlet series $F(\alpha)=\sum_{n=1}^{+\infty} \frac{\alpha(n)}{n^z}$ is
\begin{equation}
F^{(j)}(\alpha)(z)=F(\alpha^{(j)})(z),\;\forall \Ree z>k+1.
\end{equation}
An arithmetic function $\alpha\in\Omega(\NN,\mathbb C)$ is called \emph{multiplicative}, if
$\alpha(1)=1$ and
$$ \alpha(nm)=\alpha(n)\alpha(m),\;\forall n,m\in\mathbb N\;\text{with }\gcd(n,m)=1.$$
Two multiplicative arithmetic functions $\alpha,\beta \in \Omega(\NN,\mathbb C)$ are 
\emph{equivalent}, see \cite{kac}, if 
$f(p^j)=g(p^j)$ for all integers $j\geq 1$ and all but finitely many
primes $p$. We recall that $e\in \Omega(\NN,\mathbb C)$, defined by $e(1)=1$ and $e(n)=0$ for $n\geq 2$, is the identity function. Obviously, $e$ is multiplicative.
We recall the following result of Kaczorowski, Molteni and Perelli \cite{kac}.

\begin{lema}(\cite[Lemma 1]{kac})
Let $\alpha_1,\ldots,\alpha_r \in \Omega(\NN,\mathbb C)$ be multiplicative functions such that
$e,\alpha_1,\ldots,\alpha_r$ are pairwise non-equivalent, and let
$m$ be a non-negative integer. Then the functions
$$\alpha_1^{(0)},\ldots,\alpha_1^{(m)}, \alpha_2^{(0)},\ldots,\alpha_2^{(m)},
\ldots, \alpha_r^{(0)},\ldots,\alpha_r^{(m)} \in \Omega(\NN,\mathbb C)$$
are linearly independent over $\mathbb C$.
\end{lema}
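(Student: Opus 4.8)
The plan is to prove the statement directly for arithmetic functions, by reducing the dependence relation to a polynomial form and then inducting on $r$. First I would rewrite a putative $\mathbb{C}$-linear relation $\sum_{i=1}^r\sum_{j=0}^m c_{ij}\alpha_i^{(j)}(n)=0$, using $\alpha_i^{(j)}(n)=(-1)^j\alpha_i(n)\log^j n$, in the equivalent form
\[ \sum_{i=1}^r \alpha_i(n)\,P_i(\log n)=0,\qquad \forall n\geq 1, \]
where $P_i(x)=\sum_{j=0}^m(-1)^j c_{ij}x^j$ has degree $\leq m$; the goal becomes $P_i\equiv 0$ for all $i$. The case $r=1$ already reveals the mechanism and the role of non-equivalence to $e$: if $P_1\neq 0$, then $P_1$ has at most $m$ roots, so (the values $\log n$ being pairwise distinct) $P_1(\log n)\neq 0$ off a finite set of $n$, forcing $\alpha_1(n)=0$ there; since $\log(p^k)$ meets a fixed root for at most one prime power, this makes $\alpha_1(p^j)=0=e(p^j)$ for all $j\geq 1$ and all but finitely many primes $p$, i.e. $\alpha_1$ is equivalent to $e$, a contradiction. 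Hence $P_1\equiv 0$.

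For the inductive step I would peel off one function using multiplicativity. For a prime $p$ and $n$ coprime to $p$, applying the relation to $p^k n$ gives
\[ \sum_{i=1}^r \alpha_i(p^k)\,\alpha_i(n)\,P_i(\log n+k\log p)=0,\qquad \forall k\geq 0. \]
I would then seek a finitely supported weight $(\mu_k)_{k\geq 0}$ that annihilates the full moment data of $\alpha_r$, namely $\sum_k \mu_k\alpha_r(p^k)k^l=0$ for $0\leq l\leq m$, while not annihilating that of the remaining functions. Summing the displayed identities against $\mu_k$ collapses the $\alpha_r$-term completely (these $m+1$ moment conditions kill $\sum_k\mu_k\alpha_r(p^k)P_r(x+k\log p)$ for every $P_r$ of degree $\leq m$, by Taylor expansion) and leaves
\[ \sum_{i=1}^{r-1}\alpha_i(n)\,(T_iP_i)(\log n)=0,\qquad\forall n\text{ coprime to }p, \]
where $T_iP_i(x)=\sum_k\mu_k\alpha_i(p^k)P_i(x+k\log p)$. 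In the monomial basis each $T_i$ is upper triangular with constant diagonal $\sum_k\mu_k\alpha_i(p^k)$, hence invertible on polynomials of degree $\leq m$ as soon as this scalar is nonzero; so, applying the induction hypothesis to the multiplicative functions $\alpha_1,\dots,\alpha_{r-1}$ restricted to the integers coprime to $p$ (where pairwise non-equivalence and non-equivalence to $e$ persist, deleting one prime preserving an ``all but finitely many primes'' condition), I get $T_iP_i\equiv0$ and therefore $P_i\equiv0$ for $i<r$; the $r=1$ argument then forces $P_r\equiv0$.

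The hard part is guaranteeing the existence of such a weight $(\mu_k)$ --- equivalently, that the moment vectors $(\alpha_i(p^k)k^l)_k$ can be separated so as to kill $\alpha_r$ but none of the $\alpha_i$ with $i<r$. This is exactly where the hypothesis must be used, and it cannot be read off a single prime: non-equivalence is a pairwise, ``all but finitely many primes'' condition, so the infinitely many primes separating $\alpha_r$ from each competitor need not share a common element, and at one prime the local power-sequences may satisfy spurious linear relations. The crux is therefore a separation argument that leverages the tensor structure of multiplicativity: one chooses a finite set of primes $p_1,\dots,p_s$ drawn from the pairwise separating sets, restricts the relation to $n=\prod_t p_t^{a_t}$, and shows that the product data $\prod_t\alpha_i(p_t^{a_t})$ of pairwise non-equivalent functions admits no common dependence isolating $\alpha_r$ --- so that an eliminating weight exists on the combined local data even when no single prime works. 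The accompanying bookkeeping is a Vandermonde-type computation in the exponents $a_t$ and the shifts $\sum_t a_t\log p_t$, ensuring the polynomial degrees are faithfully tracked. Once this multiplicative-separation core is in place, the induction closes and every $P_i$, hence every $c_{ij}$, vanishes, which is the asserted $\mathbb{C}$-linear independence.
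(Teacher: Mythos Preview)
The paper does not supply its own proof of this lemma: it is quoted verbatim from \cite[Lemma~1]{kac} and used as a black box in the proof of Proposition~3.2. So there is no in-paper argument to compare against; one can only assess your proposal on its own terms.

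Your reduction of a putative dependence to the form $\sum_i \alpha_i(n)P_i(\log n)=0$ is correct, and the base case $r=1$ is handled cleanly: the finitely many real roots of $P_1$ can account for only finitely many prime powers, forcing $\alpha_1$ to agree with $e$ on all powers of almost every prime, contradicting non-equivalence. The inductive mechanism---evaluate at $p^kn$, form a weighted combination to produce shift operators $T_i$, and use upper-triangularity with diagonal entry $\sum_k\mu_k\alpha_i(p^k)$---is also set up correctly.

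The genuine gap is precisely where you place it: the existence of the eliminating weight is asserted, not proved. Your multi-prime paragraph (``one chooses a finite set of primes \ldots and shows that the product data \ldots admits no common dependence'') states what must be shown rather than showing it. Concretely, you need that for some choice of primes $p_1,\ldots,p_s$ the vector $\bigl(\prod_t\alpha_i(p_t^{a_t})\bigr)_{\mathbf a}$ lies outside $\operatorname{span}\bigl\{\bigl(\prod_t\alpha_r(p_t^{a_t})\,(\textstyle\sum_t a_t\log p_t)^l\bigr)_{\mathbf a}:0\le l\le m\bigr\}$ simultaneously for every $i<r$. Pairwise non-equivalence gives, for each $i$, infinitely many primes at which $\alpha_i$ and $\alpha_r$ differ on some power, but it does not immediately rule out a degree-$\le m$ polynomial relation $\alpha_i(p^k)=Q_p(k)\alpha_r(p^k)$ at the chosen primes, nor does it automatically furnish a finite common selection that defeats all such relations at once. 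That separation argument is the substantive content of the lemma, and until it is actually carried out your induction does not close.
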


\begin{prop}(See also \cite[Corollary 4]{cim} and \cite[Corollary 6]{cim})
Let $\alpha_1,\ldots,\alpha_r \in \mathcal D_{k}$ be multiplicative functions 
such that $e,\alpha_1,\ldots,\alpha_r$ are pairwise non-equivalent. Let $m$ be a non-negative integer.
Then 
$$ F^{(0)}(\alpha_1),\ldots,F^{(m)}(\alpha_1),F^{(0)}(\alpha_2),\ldots,F^{(m)}(\alpha_2), \ldots, F^{(0)}(\alpha_r),\ldots,F^{(m)}(\alpha_r) $$
are linearly independent over $\mathcal F_{k+1}$, hence, in particular,
over the field of meromorphic functions of order $<1$.
\end{prop}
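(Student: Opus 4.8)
The plan is to reduce the statement to Corollary 2.9 applied to the classical Dirichlet series, that is, to the operator $F_L$ with $L(n)(z)=n^{-z}$. First I would recall from Remark 1.6 that, with $\lambda(n)=\log n$, the sequence $L=e^{-\lambda}$ given by $L(n)(z)=n^{-z}$ belongs to $\widetilde{\mathcal E}_0\subset\widetilde{\mathcal E}_\infty$, and that $F_L$ is exactly the classical Dirichlet series operator $F$. Next, since each $\alpha_j\in\mathcal D_k$ and $\log n$ has order of growth $O(n^\varepsilon)$ for every $\varepsilon>0$, the arithmetic $i$-derivatives $\alpha_j^{(i)}(n)=(-1)^i\alpha_j(n)\log^i n$ again belong to $\mathcal D_k\subset\mathcal D_\infty$, and the differentiation identity $(3.1)$ gives
$$F^{(i)}(\alpha_j)=F(\alpha_j^{(i)})=F_L(\alpha_j^{(i)}),\qquad 1\le j\le r,\ 0\le i\le m.$$

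The arithmetic input is Lemma 3.1: since $e,\alpha_1,\dots,\alpha_r$ are pairwise non-equivalent multiplicative functions, the $r(m+1)$ arithmetic functions $\alpha_j^{(i)}$ ($1\le j\le r$, $0\le i\le m$) are linearly independent over $\mathbb C$ as elements of $\Omega(\NN,\mathbb C)$. I would then feed this family into Corollary 2.9 (using the above $L\in\widetilde{\mathcal E}_\infty$): this gives that the functions $F_L(\alpha_j^{(i)})=F^{(i)}(\alpha_j)\in\mathcal O_\infty$ are linearly independent over $\mathcal F_\infty$.

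It remains to descend from $\mathcal F_\infty$ to $\mathcal F_{k+1}$, and then to the field of meromorphic functions of order $<1$. For the first: by Proposition 1.4 (here the constant in the definition of $\mathcal E_k$ may be taken as $c=1$, so $k'=k+1$), each $F_L(\alpha_j^{(i)})$ lies in $\mathcal O_{k+1}$; since the transition maps $\mathcal F_{k+1}\hookrightarrow\mathcal F_\infty$ are injective, any $\mathcal F_{k+1}$-linear relation among the $F^{(i)}(\alpha_j)$, read on a sufficiently small half-plane, becomes an $\mathcal F_\infty$-linear relation, hence is trivial; so these functions are linearly independent over $\mathcal F_{k+1}$. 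For the second: by Proposition 2.5 we may take $\mathcal O_{<1}\subseteq\mathcal A_\infty$, so the restrictions to $\Ree z>k+1$ of meromorphic functions of order $<1$ form a subfield of $\mathcal F_{k+1}$, and independence over $\mathcal F_{k+1}$ a fortiori yields independence over that subfield.

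I do not expect a genuine obstacle: the statement is essentially an assembly of Lemma 3.1, Corollary 2.9, and the bookkeeping supplied by Remark 1.6 and $(3.1)$. The two points that need a little care are the verifications that $\alpha_j^{(i)}\in\mathcal D_k$ and that $F^{(i)}(\alpha_j)=F(\alpha_j^{(i)})$, and the descent from independence over $\mathcal F_\infty$ to independence over $\mathcal F_{k+1}$ and over the meromorphic functions of order $<1$, which rests on the injectivity of the transition maps between the $\mathcal F_k$'s and on the freedom to choose $\mathcal A_\infty$ containing $\mathcal O_{<1}$.
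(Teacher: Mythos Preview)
Your proposal is correct and follows exactly the route the paper takes: the paper's proof is the single line ``It follows from $(3.1)$, Lemma $3.1$ and Corollary $2.9$,'' and you have simply unpacked those three ingredients. One minor remark on your descent step: since $\mathcal F_{k+1}\subset\mathcal F_\infty$, independence over $\mathcal F_\infty$ immediately gives independence over the subfield $\mathcal F_{k+1}$, and the ``in particular'' clause is obtained by taking the specific choice $\mathcal A_\infty=\mathcal O_{<1}$ (so that $\mathcal F_{k+1}$ \emph{is} the field of meromorphic functions of order $<1$), rather than by requiring $\mathcal O_{<1}\subseteq\mathcal A_\infty$.
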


\begin{proof}
If follows from $(3.1)$, Lemma $3.1$ and Corollary $2.9$.
\end{proof}



Note that Proposition $3.2$, combined with \cite[Lemma 2]{kac}, generalize the main result in \cite{kac}.

\begin{prop}
If $\alpha_1,\ldots,\alpha_r\in \mathcal D_k$ are multiplicative functions, algebraically independent over $\mathbb C$, then 
$F(\alpha_1),\ldots,F(\alpha_r)\in \mathcal O_{k+1}$ are algebraically independent over $\mathcal F_{k+1}$, hence, in particular,
over the field of meromorphic functions of order $<1$.
\end{prop}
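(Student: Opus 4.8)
The plan is to exhibit Proposition $3.3$ as a direct instance of Corollary $2.9$. First I would observe that the classical Dirichlet series $F(\alpha)$ coincides with $F_L(\alpha)$ for the sequence $L(n)(z):=n^{-z}=e^{-z\log n}$, that is, $L=e^{-\lambda}$ with $\lambda(n)=\log n$. The sequence $\lambda(n)=\log n$ trivially satisfies the growth condition of Remark $1.6$ (since $\log n/\lambda(n)\equiv 1$), so that remark gives $L\in\widetilde{\mathcal E}_0\subseteq\widetilde{\mathcal E}_\infty$; moreover $L$ is a monoid morphism $(\NN,\cdot)\to(\mathcal O,\cdot)$, because $(ab)^{-z}=a^{-z}b^{-z}$, and $L\neq 0$ since $L(1)(z)=1$. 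I would then note that $\alpha_1,\dots,\alpha_r\in\mathcal D_k\subseteq\mathcal D_\infty$ are, by hypothesis, algebraically independent over $\mathbb C$. Thus all the hypotheses of the algebraic part of Corollary $2.9$ are met, and it yields that $F(\alpha_1),\dots,F(\alpha_r)$ are algebraically independent over $\mathcal F_\infty$. Since $|n^{-z}|=n^{-\Ree z}$ makes the constant $c$ in the definition of $\mathcal E_k$ equal to $1$, the bound $k'=\max\{(k+1)/c,k\}$ of Proposition $1.4$ is exactly $k+1$, so these series lie in $\mathcal O_{k+1}$.

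It then remains to descend algebraic independence from $\mathcal F_\infty$ to $\mathcal F_{k+1}$. For this I would invoke the elementary fact that algebraic independence over a field is inherited by every subfield: since $\mathcal F_\infty=\bigcup_{k}\mathcal F_k$ we have $\mathcal F_{k+1}\subseteq\mathcal F_\infty$, and the restriction map $\mathcal O_{k+1}\hookrightarrow\mathcal O_\infty$ is an injective $\mathbb C$-algebra homomorphism, so any nonzero polynomial relation among $F(\alpha_1),\dots,F(\alpha_r)$ with coefficients in $\mathcal F_{k+1}$ would, after restriction, be a nonzero such relation over $\mathcal F_\infty$, which is impossible. Finally, the specialization to meromorphic functions of order $<1$ follows by taking $\mathcal A_\infty=\mathcal O_{<1}$ — a legitimate choice by Proposition $2.5$ — for which $\mathcal F=\bigcap_k\mathcal F_k\subseteq\mathcal F_{k+1}$ is precisely the field of such functions, so independence over $\mathcal F_{k+1}$ gives independence over it as well. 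One could equally run the argument through Corollary $2.8$ with the nondiscrete set $S=\{\Ree z\gg 0\}$, since for an arithmetic function $\alpha_j(n)(z)=\alpha_j(n)$ does not depend on $z$.

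There is no essential obstacle here: the hard work — the injectivity of $F_L$ on $\mathcal H_\infty$ (Theorem $2.7$) and, underneath it, the vanishing argument of Lemma $2.6$ that turns independence of the coefficient sequences into independence of the series — has already been carried out, and it is inherited by $\mathcal D_\infty\subseteq\mathcal H_\infty$. The only items needing a line of verification are that $L(n)(z)=n^{-z}$ indeed lies in $\widetilde{\mathcal E}_\infty$ and is multiplicative (both immediate from $|n^{-z}|=n^{-\Ree z}$ and $(ab)^{-z}=a^{-z}b^{-z}$), the bookkeeping that the domain of convergence is $\Ree z>k+1$, and the observation that the multiplicativity of the $\alpha_j$ themselves is not used in the proof — it merely fixes the theme of the section — only their algebraic independence over $\mathbb C$ matters.
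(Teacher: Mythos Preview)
Your proposal is correct and follows exactly the paper's approach: the paper's proof is the single line ``It is a special case of the second part of Corollary $2.9$,'' and you have simply spelled out the verifications (that $L(n)(z)=n^{-z}$ lies in $\widetilde{\mathcal E}_0$ and is a monoid morphism, that $k'=k+1$, and that independence descends from $\mathcal F_\infty$ to the subfield $\mathcal F_{k+1}$) that the paper leaves implicit. Your remark that the multiplicativity hypothesis plays no role in the argument is also accurate.
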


\begin{proof}
It is a special case of the second part of Corollary $2.9$.
\end{proof} 

\begin{obs}\emph{
Let $K/\mathbb Q$ be a finite Galois extension. Let $\chi_1,\ldots,\chi_h$ be the irreducible characters of the Galois group.
It was proved in \cite[Corollary 5]{florin}, that the L-Artin functions, see \cite{artin1}, $L(z,\chi_1),\ldots, L(z,\chi_h)$ associated to $\chi_1,\ldots,\chi_h$
are algebraic independent over $\mathbb C$. This result was extended in \cite[Corollary 9]{cim} for the field of meromorphic functions
of order $<1$. Assuming $L(z,\chi_j)=F(\alpha_j)(z)$, $1\leq j\leq h$, $\Ree z>1$, the key point in the proof of the above results was to show that $\alpha_1,\ldots,\alpha_h\in \mathcal D_{\varepsilon}$, 
where $\varepsilon>0$ can by arbitrarely choosed, are in fact algebraically independent over $\mathbb C$. Therefore, $\alpha_1,\ldots,\alpha_h$ satisfy the hypothesis of Proposition $3.3$ for $k=\varepsilon$.}
\end{obs}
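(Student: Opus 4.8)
The plan is to verify that the arithmetic functions $\alpha_1,\ldots,\alpha_h$ arising from the Dirichlet expansions $L(z,\chi_j)=F(\alpha_j)(z)$ satisfy the three hypotheses of Proposition $3.3$ with $k=\varepsilon$: that each $\alpha_j$ is multiplicative, that each $\alpha_j$ lies in $\mathcal D_{\varepsilon}$ for every $\varepsilon>0$, and that the family $\alpha_1,\ldots,\alpha_h$ is algebraically independent over $\mathbb C$. The first two are elementary consequences of the Euler product of an Artin L-function, while the third is the nontrivial input supplied by the cited references, and is the only genuine obstacle.

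First I would recall that, writing $\rho_j$ for a representation affording $\chi_j$ and $G=\operatorname{Gal}(K/\mathbb Q)$, the Artin L-function admits the Euler product $L(z,\chi_j)=\prod_p L_{p,j}(z)^{-1}$, where each local factor $L_{p,j}(z)=\det\bigl(I-\rho_j(\operatorname{Frob}_p)\,p^{-z}\bigm|V_j^{I_p}\bigr)$ is a polynomial in $p^{-z}$ of degree at most $\chi_j(1)$. Expanding the product and collecting equal powers of $p^{-z}$ shows that $\alpha_j$ is determined by its values at prime powers and satisfies $\alpha_j(1)=1$ together with $\alpha_j(mn)=\alpha_j(m)\alpha_j(n)$ whenever $\gcd(m,n)=1$; hence $\alpha_j$ is multiplicative, which is the first hypothesis.

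Next I would bound the growth. Since $\operatorname{Frob}_p$ has finite order in the finite group $G$, the reciprocal roots of every $L_{p,j}(z)$ (the eigenvalues of $\rho_j(\operatorname{Frob}_p)$ on the inertia invariants $V_j^{I_p}$) are roots of unity. Thus $\alpha_j(p^m)$ is a complete homogeneous symmetric function of at most $\chi_j(1)$ quantities of modulus one, so $|\alpha_j(p^m)|\le\binom{m+\chi_j(1)-1}{\chi_j(1)-1}$, and by multiplicativity $|\alpha_j(n)|\le d_{\chi_j(1)}(n)$, the $\chi_j(1)$-fold divisor function. Since $d_s(n)=O_{\varepsilon}(n^{\varepsilon})$ for every fixed $s$ and every $\varepsilon>0$, it follows that $\alpha_j\in\bigcap_{\varepsilon>0}O(n^{\varepsilon})=\mathcal D_0\subseteq\mathcal D_{\varepsilon}$, which is the second hypothesis; in particular $F(\alpha_j)=L(\,\cdot\,,\chi_j)$ is holomorphic on $\Ree z>1$, as for the classical Dirichlet series in Remark $1.6$.

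The remaining and decisive point is the algebraic independence of $\alpha_1,\ldots,\alpha_h$ over $\mathbb C$. Here I would invoke directly \cite[Corollary 5]{florin} and its refinement \cite[Corollary 9]{cim}, whose arguments establish precisely this property of the coefficient sequences; the underlying mechanism is that the distinct irreducible characters are linearly independent class functions on $G$ and that, by the Chebotarev density theorem, every conjugacy class occurs as a Frobenius, which propagates character independence to algebraic independence of the prime-power values. Granting this, $\alpha_1,\ldots,\alpha_h$ are multiplicative, lie in $\mathcal D_{\varepsilon}$, and are algebraically independent over $\mathbb C$, so they satisfy all the hypotheses of Proposition $3.3$ with $k=\varepsilon$. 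Applying that proposition then yields the algebraic independence of $L(z,\chi_1),\ldots,L(z,\chi_h)$ over $\mathcal F_{1+\varepsilon}$, and, since $\varepsilon>0$ is arbitrary and $\mathcal F\subseteq\mathcal F_{1+\varepsilon}$, in particular over the field of meromorphic functions of order $<1$, thereby recovering and extending the results of \cite{florin} and \cite{cim}.
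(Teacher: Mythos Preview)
Your proposal is correct and in fact supplies considerably more detail than the paper does: in the paper this is a \emph{Remark}, not a theorem with proof, and the author simply asserts (by reference to \cite{florin} and \cite{cim}) that the coefficient sequences $\alpha_j$ lie in $\mathcal D_{\varepsilon}$ and are algebraically independent over $\mathbb C$, hence meet the hypotheses of Proposition~3.3. Your explicit verification of multiplicativity and the growth bound via the Euler product and the divisor-function estimate is accurate and fills in what the paper leaves implicit; the decisive algebraic-independence input is, exactly as you say, borrowed from the cited references.
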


\noindent
\textbf{Aknowledgment.} I express my gratitude to Florin Nicolae for valuable discussions regarding the results of this paper.

\section*{Compliance with Ethical Standards.} 

This article was not funded. The author declares that he has not conflict of interest. 

{}

\vspace{2mm} \noindent {\footnotesize
\begin{minipage}[b]{15cm}
Mircea Cimpoea\c s, Simion Stoilow Institute of Mathematics, Research
unit 5, P.O.Box 1-764,\\
Bucharest 014700, Romania, E-mail: mircea.cimpoeas@imar.ro
\end{minipage}}
\end{document}